\newtheorem{theorem}{Theorem}
\newtheorem{lemma}{Lemma}
\theoremstyle{remark}
\newtheorem*{rem}{Remark}
\title{Mathematical study of the equatorial Ekman boundary layer}
\author{Jean Rax
\footnote{Sorbonne Université, Université Paris-Diderot SPC, CNRS,  Laboratoire Jacques-Louis Lions, LJLL, F-75005 Paris. rax@ljll.math.upmc.fr}}
\begin{document}

\maketitle

\begin{abstract}
In this paper we study the well-posedness of a simple model of boundary layer for rotating fluids between two concentric spheres near the equator. We show that this model can be seen as  a degenerate elliptic equation, for which we prove an existence result thanks to a Lax-Milgram type lemma. We also prove uniqueness under an additional integrability assumption and present a transparent boundary condition for such layers.
\end{abstract}

\section{Introduction}

In this article we will study the linear Ekman boundary layer near the equator for a rotating fluid between two concentric spheres. With $v$ the azimuthal flow velocity and $\psi$ the stream function, the equation we will consider is
\begin{equation}
\label{eq}
\begin{aligned}
& \d_z v + z \d_y v - \frac{1}{2} \d_y^4 \psi= s_\psi \\
& \d_z \psi + z \d_y \psi + \frac{1}{2} \d_y^2 v= s_v.
\end{aligned}
\end{equation}

We will mainly consider three domains and boundary conditions:

\begin{enumerate}[label=(\Roman*)]
\item \label{itm:casA} The domain is $y>0,z>0$ and the boundary condition at $z=0$ is $\psi_{|z=0}=0$.
\item \label{itm:casB} The domain is $y>0,H>z>0$, the boundary condition at $z=0$ is $\psi_{|z=0}=0$ and the one at $z=H$ is $\psi_{|z=0} = \Lambda v$, where $\Lambda: H^\frac{1}{2}_0\to H^{-\frac{1}{2}}$ is a non-positive operator.
\item \label{itm:casC} The domain is $y>0,z>H$ and the boundary condition at $z=H$ is $v_{|z=H}=v_H \in H^\frac{1}{2}_0$.
\end{enumerate}

Other cases can be obtained by altering the $z$ boundary conditions (for example $\psi_{z=0}=-\Lambda v_{|z=0}$). The boundary condition at $y=0$ will be
\begin{align*}
v_{|y=0}=V,\d_y \psi_{|y=0}=\Upsilon, \psi_{|y=0}=\Psi 
\end{align*}
and when not especially specified we will take $V=\Upsilon=0$, $\Psi=0$.

Since their description by Proudman~\cite{proudman1956almost} and their formal analysis by Stewartson~\cite{stewartson1957almost, stewartson1966almost} the behavior of highly rotating fluids have been widely mathematically studied. We refer to the book of Chemin, Desjardins, Gallagher and Grenier~\cite{chemin2006mathematical} for more details. Although the case of a horizontal surface (and the resulting $E^\frac{1}{2}$ boundary layer called Ekman layer) is now well understood,  especially since Grenier and Masmoudi~\cite{grenier1997ekman}, several other geometries have been considered. For a vertical surface (i.e the axis of rotation is perpendicular to the normal) the resulting boundary layer of size $E^\frac{1}{3}$ is well known and analysed (for a formal analysis see for example Van de Vooren~\cite{van1992stewartson}, and for a detailed analysis with anisotropic viscosity see Bresch, Desjardins and Gérard-Varet~\cite{bresch2004rotating}). In the spherical case the main difficulty is near the equator: as the latitude goes to $0$ the Ekman boundary layer degenerates and the classical analysis becomes invalid, leading to the need for an additional assumption of smallness near the equatorial area (as in the article by Rousset~\cite{rousset2007asymptotic}). It is to be noted that for small latitudes the $\beta$-plane model is used to take into account the variations of the angle between the axe of rotation and the normal of the surface as done by Dalibard and Saint-Raymond~\cite{dalibard2010mathematical}. 

In this paper we will focus on a linear and time independent model taking into account the spherical geometry (or any other similar geometry) in the vicinity of the equator. The resulting boundary layer (of size $E^\frac{1}{5} \times E^\frac{2}{5}$) was first derived by Stewartson~\cite{stewartson1957almost} and is a typical example of so called degenerate boundary layer~\cite{gerard2005formal, gerard2008remarks}. The derivation of the equation and its numerical analysis have been done notably by Marcotte, Dormy and Soward~\cite{marcotte2016equatorial}, and is briefly recalled in the appendix, but up to our knowledge no formal proof of the well posedness of the problem exists. 

For equation \eqref{eq} we will prove the existence in the natural energy space. We will also prove the uniqueness assuming additional integrability. For case~\ref{itm:casB} and for variants these additional assumptions are redundant and we have one and only one solution, however for case~\ref{itm:casA} and~\ref{itm:casC} the resulting space is smaller leading to an incomplete result.

A simplified statement of the existence and uniqueness result for $s_v=s_\psi=0$ is:

\begin{theorem}
For any $V,\Upsilon \in H^\frac{1}{2}_0(\mathbb{R}_+)$, $\Psi \in H^\frac{3}{2}_0(\mathbb{R}_+)$ there exists a weak solution of \eqref{eq} in cases~\ref{itm:casA},\ref{itm:casB},\ref{itm:casC}. This solution is such that
\begin{equation*}
\d_y v \in L^2(\Omega), \d_y^2 \psi \in L^2(\Omega).
\end{equation*}

Moreover if $v \in L^2, \psi \in L^2$ this solution is unique.
\end{theorem}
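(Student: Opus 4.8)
The plan is to recast \eqref{eq} as a variational problem $a\big((v,\psi),\varphi\big)=\ell(\varphi)$ and to solve it by a non-symmetric Lax--Milgram (Lions-type) lemma, in which the trial space is the natural energy space, the test space is a smaller space of admissible functions with a suitable norm, and the role of coercivity is played by an estimate $|a(\varphi,\varphi)|\gtrsim\|\varphi\|^2$ on the test space. As a preliminary reduction I would remove the data at $y=0$: since $V,\Upsilon\in H^\frac{1}{2}_0(\mathbb{R}_+)$ and $\Psi\in H^\frac{3}{2}_0(\mathbb{R}_+)$, trace theory yields a fixed smooth $(\bar v,\bar\psi)$, decaying at infinity, with $\bar v_{|y=0}=V$, $\d_y\bar\psi_{|y=0}=\Upsilon$, $\bar\psi_{|y=0}=\Psi$ and $\bar\psi_{|z=0}=0$; writing $(v,\psi)=(\bar v,\bar\psi)+(v',\psi')$ reduces everything to $V=\Upsilon=\Psi=0$, at the price of changing $(s_\psi,s_v)$ into $(s_\psi-L\bar v+\tfrac12\d_y^4\bar\psi,\ s_v-L\bar\psi-\tfrac12\d_y^2\bar v)$, where $L:=\d_z+z\d_y$ is the transport operator along the characteristics $y-z^2/2=\mathrm{cst}$.

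The heart of the matter is the choice of bilinear form. Testing the first equation of \eqref{eq} against $\chi$ and the second against $\phi$, integrating over $\Omega$, integrating by parts in $y$ (twice in $\chi\,\d_y^4\psi$, once in $\phi\,\d_y^2 v$) and then in $z$ in the transport terms, one is led to
\[
a\big((v,\psi),(\phi,\chi)\big):=-\int_\Omega\Big(v\,L\chi+\psi\,L\phi+\tfrac12\,\d_y^2\psi\,\d_y^2\chi+\tfrac12\,\d_y v\,\d_y\phi\Big),
\]
the boundary contributions being moved into $\ell$ or cancelled by the boundary conditions (this dictates that the test functions satisfy $\chi_{|z=0}=0$, $\phi_{|y=0}=0$, $\chi_{|y=0}=\d_y\chi_{|y=0}=0$ and decay at infinity). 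The decisive point is that the transport part is skew-adjoint modulo boundary, so that inserting the pair $(\phi,\chi)=(v,\psi)$ gives, for admissible test pairs,
\[
a\big((v,\psi),(v,\psi)\big)=-\int_\Omega L(v\psi)-\tfrac12\|\d_y^2\psi\|_{L^2(\Omega)}^2-\tfrac12\|\d_y v\|_{L^2(\Omega)}^2=-\tfrac12\|\d_y^2\psi\|_{L^2(\Omega)}^2-\tfrac12\|\d_y v\|_{L^2(\Omega)}^2,
\]
the first integral vanishing because $\psi$ vanishes at $z=0$, $v$ at $y=0$, and both decay at infinity (in case~\ref{itm:casB} this integral instead equals $\langle\Lambda v,v\rangle_{|z=H}\le 0$ once the natural boundary condition is incorporated into $a$, which is exactly where the non-positivity of $\Lambda$ is used; in case~\ref{itm:casC} one first lifts $v_H$). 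Hence $-a$ is ``coercive'' for the norm $\|(v,\psi)\|^2:=\|\d_y v\|_{L^2(\Omega)}^2+\|\d_y^2\psi\|_{L^2(\Omega)}^2$, which is precisely the norm in the statement.

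I would then take the trial space $\mathcal{H}$ to be the completion, for $\|\cdot\|$, of the admissible smooth functions, and the test space $\Phi$ to be those functions with an appropriate (finer) norm. Checking the hypotheses of the Lions lemma amounts to: (i) for each fixed $\varphi\in\Phi$, the map $(v,\psi)\mapsto a((v,\psi),\varphi)$ is continuous on $\mathcal{H}$ — which uses the rewritten form above together with a Poincaré inequality on bounded strips in $y$ (legitimate since $\phi,\chi$ and $\d_y\chi$ vanish at $y=0$), controlling $\|v\|_{L^2(\mathrm{supp}\,\chi)}$ by $\|\d_y v\|_{L^2(\Omega)}$ and $\|\psi\|_{L^2(\mathrm{supp}\,\phi)}$ by $\|\d_y^2\psi\|_{L^2(\Omega)}$; (ii) the coercivity estimate just established; (iii) continuity of $\ell(\varphi)=\int_\Omega(\chi s_\psi+\phi s_v)+(\text{lift contributions})$ — which reduces to the lift terms when $s_v=s_\psi=0$ — on $\Phi$ (this is what dictates the hypotheses on $s_\psi,s_v$ and the choice of the lift). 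The lemma then provides $(v,\psi)\in\mathcal{H}$ with $a((v,\psi),\varphi)=\ell(\varphi)$ for all $\varphi\in\Phi$; since $\Phi$ contains interior test functions this is a weak solution of \eqref{eq}, and by construction $\d_y v\in L^2(\Omega)$ and $\d_y^2\psi\in L^2(\Omega)$. Cases~\ref{itm:casB} and~\ref{itm:casC} follow the same scheme, only the boundary terms at $z=H$ changing.

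I expect the main obstacle to be the interplay of two points. First, giving a meaning to the trace of $\psi$ at $z=0$ (resp.\ $z=H$) for elements of $\mathcal{H}$: the energy norm controls no $z$-derivative, so the trace must be read off from the equation itself, which only gives $L\psi=s_v-\tfrac12\d_y^2 v$ in a space of negative $y$-regularity; the weak formulation and the boundary conditions therefore have to be set up in matching dual spaces, and one must check that the solution produced does satisfy $\psi_{|z=0}=0$ in this weak sense. Second, transferring the identity $a((v,\psi),\varphi)=\ell(\varphi)$, valid only for $\varphi\in\Phi$, to assertions about $(v,\psi)$ itself — in particular the energy identity needed for uniqueness. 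Under the extra hypothesis $v,\psi\in L^2(\Omega)$, applying the energy computation to the difference of two solutions (which has vanishing data): all boundary integrals now make sense, those at $z=0$ and $y=0$ vanish by the homogeneous boundary conditions, the one at $z=H$ is $\le 0$ in case~\ref{itm:casB} by the sign of $\Lambda$, and those at infinity vanish because $v,\psi,\d_y v,\d_y^2\psi\in L^2$ forces decay (via an approximation argument); one thus gets $\|\d_y v\|_{L^2(\Omega)}^2+\|\d_y^2\psi\|_{L^2(\Omega)}^2=0$, hence $\d_y v=\d_y^2\psi=0$, and then $v=\psi=0$ from the $y=0$ boundary conditions together with the integrability. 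This last step is where integrability is genuinely needed, which is why the result is unconditional in case~\ref{itm:casB} — whose geometry already puts the energy space inside $L^2$ — but not in cases~\ref{itm:casA} and~\ref{itm:casC}.
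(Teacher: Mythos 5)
Your proposal is correct in outline and follows essentially the same route as the paper: existence via a Lions--Lax--Milgram duality argument whose trial space is the energy space (the paper's $E_{0,0}$, with Hardy-type weights), whose test space is a smaller space with a finer norm encoding the $z$-boundary conditions, with coercivity coming from the cross-paired energy identity in which the transport term reduces to a signed boundary contribution (nonpositivity of $\Lambda$ in case~\ref{itm:casB}), nonhomogeneous data at $y=0$ handled by a lifting, weak traces read off from the formulation, and uniqueness by the energy identity applied to the difference of solutions after regularization in $z$ under the extra $L^2$ hypothesis. The only points where the paper is more specific than your sketch are the choice of test norm (the graph norm $\|\bm{\varpi}\|_{E_{0,0}}+\|T\bm{\varpi}\|_{E_{0,0}'}$, recovered from the diagonal coercivity by a short bootstrap) and the behaviour as $z\to\infty$ in the uniqueness step: $L^2$ integrability alone does not force decay of $\mathcal{E}(Z)=\int_0^\infty v\psi\,dy$, and the paper instead argues that $\mathcal{E}$ is integrable in $z$, nondecreasing, and vanishes at $z=0$ (the trace being justified by the Caccioppoli estimate), hence vanishes identically.
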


When uniqueness holds, a transparent boundary layer operator (similar to the one in~\cite{marcotte2016equatorial}) will be described. Such an operator is of great interest for numerical analysis or for connecting the boundary layer to the interior solution (or in this case to other boundary layers).

The main difficulty of the problem is that each approach to prove well-posedness stumbles on a different term. Let us observe the influence of each part of the equation:
\begin{itemize}
\item The $\d_z$ term is the obvious source of the degenerate character of the equation as a boundary layer equation as without it we recover a simple ODE with respect to $y$. More precisely at each $z$ we recover the classical Ekman layer. Even if its size diverges as $z \to 0$ one can make a formal expansion in powers of $\sqrt{z}$ and $y/\sqrt{z}$ which is the same as doing an expansion in $\frac{1}{\sqrt{\cos(\theta)}}$ for the Ekman problem.
\item The $z \d_y$ term associated to the fact that $y>0$ is also a major source of difficulties as it renders the spherical geometry. It prevents any simplification using symmetry arguments and as a counterpart of a simple domain it creates transport along characteristics $z-y^2/2=c$ which will create problems when trying to prove uniqueness. A possible approach would be to use well chosen weighted spaces that follow transport along those characteristics but we were unable to obtain satisfactory results with it. Without this transport term we recover the simple case of a vertical geometry.
\item The fact that the equation is a system and without a maximum principle prevents us from directly using modulated energy methods or entropy estimates that are usually helpful in such situations, for example in cross-diffusion problems.
\item Another main difference with standard cross-diffusion is the order of the operator in $y$: one term is a laplacian but the other is a bilaplacian. This asymmetry coupled with the boundary at $y=0$ is the main obstacle when trying to find better variables for the problem as the different regularity leads to mismatches in boundary conditions. The same problem arises when trying to use the decomposition between symmetric and skew-symmetric term for Carleman like estimate.
\end{itemize}

For these reasons, and the fact that the domain in $y$ is unbounded, our approach will be to consider the problem as a degenerate elliptic one. As a drawback this overlooks the structure of the skew-symmetric term containing the $\d_z$ and transport terms and leads to sub-optimal results in terms of regularity with respect to $z$.

In the first part we will deal with existence with a proof similar to the ones used by Fichera~\cite{fichera1959unified}. The main point is the use of a well-chosen space of test functions for a duality approach. This space must ensure both a coercivity condition for the adjoint via a positivity of boundary terms and the recovery of the boundary conditions which are weakly formulated. These two constraints dictate the set of admissible horizontal boundary conditions.

In the second part we will show the uniqueness by standard energy methods. We will also propose variations of the main problem allowing a uniqueness result in the same space as existence. For such variants we will define a transparent boundary operator similar to a Dirichlet to Neumann operator and of great importance for numerical simulation.

\section{Existence and properties of weak solutions}

In this section we prove the existence of weak solutions of \eqref{eq} using duality and energy methods for degenerate elliptic equations similar to the ones used by Fichera~\cite{fichera1959unified}. Thus, multiple boundary conditions can be weakly prescribed at $z=0$ and $z=H$, but the energy space is not regular enough to guarantee proper traces. 

In the rest of the article we will denote $\bm{u}=(v,\psi)$ and $\bm{s}=(s_\psi,s_v)$. The equation can then be formulated as $L \bm{u}=(T-\frac{1}{2} D) \bm{u}=\bm{s}$ where we defined the positive symmetric operator $D$ and the skew-symmetric operator $T$ as
\begin{align*}
T=\begin{pmatrix}
0&\d_z +z \d_y  \\
\d_z + z \d_y &0  
\end{pmatrix},
D= \begin{pmatrix}
\d_y^4 & 0       \\
0      & -\d_y^2 
\end{pmatrix}.
\end{align*}
The operator $D$ leads to the choice of the energy space $E_0$ and the operator $T$ prescribes both the allowed horizontal boundary conditions and the choice of the test function space. It is to be noted that the $z$ derivative and dependence is only involved in $T$, so it has no corresponding term in the energy space.

We will provide a detailed analysis for cases~\ref{itm:casA} and~\ref{itm:casB} and for homogeneous boundary conditions.

The other cases follow the exact same analysis, except for the choice of the space of test functions, which must be adapted to the horizontal boundary conditions. We will discuss nonhomogeneous boundary conditions in the next subsection.

\subsection{Statement of the result}
We define the Banach space $E_0$ by
\begin{align*}
\norm[E_0]{\bm{u}}^2  = \int_\Omega \left( \left|\d_y v \right|^2 + \left|\frac{v}{1+y} \right|^2\right) +\int_\Omega \left( \left|\d_y^2 \psi \right|^2 + \left|\frac{\psi}{1+y^2} \right|^2\right) 
\end{align*}
and to enforce homogeneous boundary conditions at $y=0$ we define
\begin{equation*}
\norm[E_{0,0}]{\bm{u}}^2  = \int_\Omega \left( \left|\d_y v \right|^2 + \left|\frac{v}{y} \right|^2\right) +\int_\Omega \left( \left|\d_y^2 \psi \right|^2 + \left|\frac{\psi}{y^2} \right|^2\right).
\end{equation*}
Lastly for the weak formulation we denote the graph norm
\begin{equation*}
\norm[E_1]{\bm{u}}    =\norm[E_{0,0}]{\bm{u}}+\norm[E_{0,0}']{T \bm{u}}  .     
\end{equation*}
Note that $E_0$ lacks regularity with respect to $z$ to have traces at $z=0$ (or $z=H$). Moreover, $\bm{u} \in E_1$ requires not only some weak (negative) regularity on $\d_z \bm{u}$ but also a better integrability than just $E_0$.

An important point is that these $z$ boundary conditions are derived from the space of test functions. Let us consider 
\begin{equation*}\mathcal{D}=\{\bm{\varpi}(y,z)=(w(y,z),\phi(y,z)); w \in \Cinf((0,+\infty)\times [0,+\infty)), \phi \in \Cinf((0,+\infty)\times (0,+\infty)) \}\end{equation*}
for case~\ref{itm:casA} and 
\begin{equation*}\mathcal{D}=\{\bm{\varpi}=(w,\phi); w \in \Cinf((0,+\infty)\times [0,H]), \phi \in \Cinf((0,+\infty)\times (0,H]) \text{ s.t } \phi_{|z=H}=-\Lambda^* w_{|z=H} \}\end{equation*}
for case~\ref{itm:casB}. Note that in fact we can replace $\mathcal{D}$ by its closure under the $E_1$ norm.

Given this set of definitions, the following existence theorem holds, where as for the rest of the paper $C$ denotes a numerical constant
\begin{theorem}
\label{existence}
Let $\bm{s}=(s_v,s_\psi) \in E_1'$.
\begin{enumerate}[label=(\roman*)]
\item \label{itm:thi} (existence of weak solutions)
There exists $\bm{u} \in E_{0,0}$  such that $\forall \bm{\varpi}=(w,\phi) \in \mathcal{D}$
\begin{align}
\label{eqweak}         
\int_\Omega  - v  \d_z\phi - z v  \d_y\phi -\frac{1}{2} \d_y^2 \psi \d_y^2 \phi +\int_\Omega - \psi  \d_z w - z  \psi  \d_y w -\frac{1}{2} \d_y v \d_y w     = \int_\Omega s_\psi \phi+\int_\Omega s_v w 
\end{align}
and
\begin{align*}
\norm[E_{0,0}]{\bm{u}} \leq C \norm[E_1']{\bm{s}} 
\end{align*}

\item \label{itm:thii} (boundary conditions) If $\bm{u} \in E_0 \cap H^2_{loc}$ then $\psi_{|z=0}=0$ and in case~\ref{itm:casB}, $\psi_{|z=H}=\Lambda v_{|z=H}$.

\item \label{itm:thiii} (interior regularity)
If $\d_y^2 \bm{s} \in E_0'$ we have a Caccioppoli type inequality: for all $y_0>0,z_1>0$ there exist $C_{y_0,z_1}>0$ such that
\begin{equation*}
\int_{(y_0,\infty)\times(0,z_1)} \left| \d_y^4 \psi \right|^2+ \left| \d_y^3 v \right|^2 \leq C_{y_0,z_1} \left(\norm[E_0]{\bm{u}}^2+\norm[E_0']{\d_y^2 \bm{u}} \right)
\end{equation*}
\end{enumerate}
\end{theorem}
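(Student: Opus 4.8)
The plan is to treat each of the three claims separately, following the Fichera-type strategy already announced. For part~\ref{itm:thi}, the idea is a duality/Lax--Milgram argument on the test function space $\mathcal{D}$ (or its $E_1$-closure). First I would fix the bilinear form suggested by \eqref{eqweak}, namely pairing $\bm{u}$ against $L^*\bm{\varpi}=(-T-\tfrac12 D)\bm{\varpi}$ where the adjoint of $T$ is taken with the boundary conditions baked into $\mathcal{D}$; the crucial computation is that on $\mathcal{D}$ one has, after integration by parts in $z$ and $y$,
\begin{equation*}
\langle L^*\bm{\varpi},\bm{\varpi}\rangle = \tfrac12\int_\Omega\bigl(|\d_y^2\phi|^2+|\d_y w|^2\bigr) + \tfrac12\int_{\Gamma}(\text{boundary terms}),
\end{equation*}
where the boundary contribution at $z=0$ vanishes because $\phi_{|z=0}$ is unconstrained but appears paired against $w_{|z=0}$ in a way that cancels (this is exactly the structural point that dictates which horizontal conditions are admissible), and the $z=H$ term is $-\tfrac12\langle\Lambda^* w, w\rangle\ge 0$ by non-positivity of $\Lambda$, in case~\ref{itm:casB}. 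One also needs a Poincaré/Hardy-type inequality controlling $\|w/y\|_{L^2}$ and $\|\psi/y^2\|_{L^2}$ by $\|\d_y w\|_{L^2}$, $\|\d_y^2\psi\|_{L^2}$ on the half-line with the vanishing at $y=0$ built into $\mathcal{D}$; this upgrades the coercivity to the full $E_{0,0}$ norm on the quotient of $\mathcal{D}$ by $\ker L^*$. Then the linear functional $\bm{\varpi}\mapsto\langle\bm{s},\bm{\varpi}\rangle$ is bounded on this space with the $E_1'$ norm of $\bm{s}$, and the Lax--Milgram lemma (in the Lions--Ne\v{c}as form, since the form need not be symmetric in $\bm{u}$ versus $\bm{\varpi}$) yields $\bm{u}\in E_{0,0}$ with the stated bound, satisfying \eqref{eqweak}. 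The main obstacle here is verifying that the boundary terms arising from integration by parts all have the right sign (or vanish) with the chosen $\mathcal{D}$; this is the heart of the Fichera machinery and the reason the definition of $\mathcal{D}$ looks the way it does.

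For part~\ref{itm:thii}, the plan is: given the extra assumption $\bm{u}\in E_0\cap H^2_{loc}$, the solution is regular enough near $z=0$ (resp. $z=H$) to integrate by parts in \eqref{eqweak} against test functions $\bm{\varpi}$ that do \emph{not} vanish at $z=0$. Comparing the resulting identity with the strong equation $L\bm{u}=\bm{s}$ (valid a.e. by the regularity assumption) leaves only a boundary term of the form $\int \psi_{|z=0}\,w_{|z=0}$ — or in case~\ref{itm:casB} a term $\int(\psi_{|z=H}-\Lambda v_{|z=H})\,w_{|z=H}$ — which must vanish for all admissible traces $w_{|z=0}$, $w_{|z=H}$; since those traces range over a dense set, we conclude $\psi_{|z=0}=0$ and $\psi_{|z=H}=\Lambda v_{|z=H}$. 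The only care needed is justifying the integrations by parts up to the boundary, which is where $H^2_{loc}$ is used, together with the decay in $y$ from $\bm{u}\in E_0$.

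For part~\ref{itm:thiii}, the plan is a standard Caccioppoli / difference-quotient argument localized away from $y=0$. Fix $y_0>0$, $z_1>0$ and a cutoff $\chi(y,z)$ supported in $(y_0/2,\infty)\times(0,z_1)$, equal to $1$ on $(y_0,\infty)\times(0,z_1/2)$ — note we do not cut off at $z=0$, consistent with the local statement. Apply the weak formulation with test functions built from $\chi^2$ times tangential (in $y$) difference quotients of $\bm{u}$; because $\mathcal{D}$ allows functions not vanishing at $z=0$ and we only differentiate in $y$, these are admissible. The $D$-part produces the good terms $\int\chi^2(|\d_y^4\psi|^2+|\d_y^3 v|^2)$ up to the difference-quotient limit, while the $T$-part, being first order, is absorbed: the $\d_z$ piece integrates by parts in $z$ (no boundary term at $z=0$ after the difference quotient is symmetrized, or it is controlled since $\chi$ is free there), and the $z\d_y$ piece is lower order in $y$-derivatives, hence absorbed by Young's inequality into the good terms plus $C_{y_0,z_1}\|\bm{u}\|_{E_0}^2$. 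The hypothesis $\d_y^2\bm{s}\in E_0'$ is exactly what is needed to pair $\bm{s}$ against the second-order-in-$y$ test functions and control it by $\|\d_y^2\bm{u}\|_{E_0'}$-type norms — wait, more precisely by $\|\d_y^2\bm{s}\|_{E_0'}$; I would track constants carefully here. The main subtlety is the $z$-boundary: since $E_0$ carries no $z$-regularity, one must be sure the $z$-integration by parts in the Caccioppoli estimate genuinely produces no uncontrolled trace at $z=0$, which works because the $\d_z$ term sits in the skew-symmetric operator $T$ and the localized energy identity only sees $\int\d_z(\text{quadratic in }\bm{u})$, which integrates to a trace at $z=0$ that can be dropped by choosing $\chi$ not to vanish there and instead absorbing it — or, cleanly, by first establishing the estimate on $(0,z_1)$ with a cutoff vanishing near $z_1$ only and using the sign of the resulting $z=0$ boundary quadratic term. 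I expect this $z$-boundary bookkeeping to be the one genuinely delicate point of part~\ref{itm:thiii}.
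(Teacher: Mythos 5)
Your overall route is the paper's: a duality/observability argument for $L^*$ on $\mathcal{D}$ for~\ref{itm:thi}, recovery of the weak $z$-boundary conditions from the weak formulation for~\ref{itm:thii} (your version, integrating by parts and comparing with the equation, is a harmless reorganization of the paper's argument with concentrated test functions $g(y)h(z/\eta)$), and a localized Caccioppoli estimate with $y$-regularization for~\ref{itm:thiii}. Two points, however, are genuine gaps. In~\ref{itm:thi}, the $z=0$ term is not killed by a cancellation between an unconstrained $\phi_{|z=0}$ and $w_{|z=0}$: in case~\ref{itm:casA} it vanishes because membership in $\mathcal{D}$ forces $\phi$ to vanish near $z=0$ (this is exactly how $\psi_{|z=0}=0$ is encoded), while $w$ is free there; in case~\ref{itm:casB} the constraint $\phi_{|z=H}=-\Lambda^* w_{|z=H}$ produces the non-positive term $\int_y w_{|z=H}\Lambda^* w_{|z=H}$. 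More importantly, coercivity in the $E_{0,0}$ norm alone is not sufficient: since $\bm{s}$ only lies in $E_1'$, the Lions--Lax--Milgram/Hahn--Banach step needs $|\dual{\bm{s}}{\bm{\varpi}}|\leq \norm[E_1']{\bm{s}}\,\norm[E_1]{\bm{\varpi}}\leq C\,\norm[E_1']{\bm{s}}\,\norm[E_{0,0}']{L^*\bm{\varpi}}$, i.e.\ the observability inequality must control the full graph norm $\norm[E_1]{\bm{\varpi}}=\norm[E_{0,0}]{\bm{\varpi}}+\norm[E_{0,0}']{T\bm{\varpi}}$. The paper closes this with $T\bm{\varpi}=-L^*\bm{\varpi}-\frac{1}{2}D\bm{\varpi}$ and the boundedness of $D:E_{0,0}\to E_0'$; your plan stops at the $E_{0,0}$ bound, so this step is missing (also, no quotient by $\ker L^*$ is needed or even licit: the inequality itself gives injectivity of $L^*$ on $\mathcal{D}$, and the dual functional would not be well defined on a quotient).

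In~\ref{itm:thiii}, first-order difference quotients of $\bm{u}$ times $\chi^2$ only reproduce the $E_0$ energy; to reach $\d_y^3 v$ and $\d_y^4\psi$ you must test with second-order-in-$y$ objects, i.e.\ the analogue of the paper's $\d_y^2(\d_y^2\bm{u}_\eps\chi^4)$ (double difference quotients). Once you do, the transport contributes cross terms of the form $\int \d_y^2 v\,\d_y^2\psi\,(\d_z+z\d_y)\chi^4$, and $\d_y^2 v$ is \emph{not} controlled by $E_0$: it must first be interpolated between $\d_y v$ and $\d_y^3 v$ (an extra integration by parts, as in the paper) before Young's inequality can absorb it, so ``lower order, absorbed'' is too quick. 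Moreover, the $z=0$ trace you worry about would be the cross term $\int_{\{z=0\}} \d_y^2 v\,\d_y^2\psi\,\chi^4\,dy$, which has no sign, so your fallback of ``using the sign of the $z=0$ boundary quadratic term'' does not work; the correct device (and the paper's) is never to integrate by parts in $z$ against $\bm{u}$: keep $\d_z$ on the regularized test function, or equivalently substitute $\d_z\bm{u}_\eps$ from the mollified equation, so that no $z=0$ trace of $\bm{u}$ ever appears. Finally, fix the cutoff: as written it is supported in $(y_0/2,\infty)\times(0,z_1)$ yet equal to $1$ on $(y_0,\infty)\times(0,z_1/2)$, which is contradictory; you want $\chi\equiv 1$ up to and including $z=0$, cutting off only near $z\sim z_1$ and $y\sim y_0/2$, as in the paper.
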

\begin{rem} as we have
\begin{align*}
& \d_z v =-z \d_y v + \frac{1}{2}\d_y^4 \psi +s_\psi            \\
& \d_z \d_y \psi = -z \d_y^2 \psi -\frac{1}{2}\d_y^3 v+\d_y s_v 
\end{align*}
from the interior regularity with respect to $y$ we can obtain interior regularity with respect to $z$.
\end{rem}

Points~\ref{itm:thi},~\ref{itm:thii} and~\ref{itm:thiii} will be proved in subsection~\ref{subsection:duality},~\ref{subsection:BCcoherence} and~\ref{subsection:caccioppoli} respectively.
\subsection{Remarks on nonhomogeneous boundary conditions}			

The previous result only considers homogeneous boundary conditions. As usual we can recover nonhomogeneous boundary condition by lifting these boundary conditions. In this subsection we will briefly discuss this lifting.

Note that an important difference with Ekman boundary layers is that we are able to impose $3$ boundary conditions at $y=0$ whereas in classical Ekman boundary layers only $2$ boundary conditions can be imposed. 
This difference does not come from any particularity of our system  as the same number can be prescribed if we replace the transport $\d_z + z \d_y$ by $\lambda u + c \d_y$ with $\lambda,c \neq 0$. 
On the contrary, one can only prescribe 2 $2$ conditions for Ekman layers. this comes from a degeneracy of the Ekman system, which causes the Ekman pumping.

In order to consider nonhomogeneous boundary conditions we will need the following lemma:
\begin{lemma}
\begin{itemize}
\item Let $\Psi \in H^\frac{3}{2}_0(\mathbb{R}_+)$ i.e $\Psi \in  H^\frac{3}{2}$ and $\Psi(0)=0$. Moreover suppose that $z \Psi \in H^\frac{3}{2}$. Then there exist $\bm{r} \in E_0$ such that
\begin{align*}
&L \bm{r} \in E_1' \text{ and } \norm[E_1']{L \bm{r}} \leq C \norm[H^\frac{3}{2}_0]{(1+z)\Psi} \\
\bm{r}_{|z=0}=0,~& r_{v|y=0}=0,~ r_{\psi|y=0}=\Psi,~ \d_y r_{\psi|y=0}=0
\end{align*}
\item Let $\Upsilon \in H^\frac{1}{2}_0(\mathbb{R}_+)$ i.e such that $\Upsilon \in H^\frac{1}{2}$ and
\begin{equation}
\label{compatpsi}
\int_0^1 \frac{|\Upsilon|^2(z)}{z} dz < + \infty.
\end{equation}
Let suppose moreover $z \Upsilon \in H^\frac{1}{2}(\mathbb{R}^+)$. Then there exist $\bm{r}=(r_v,r_\psi) \in E_0$ verifying
\begin{align*}
&L \bm{r} \in E_1' \text{ and } \norm[E_1']{L \bm{r}} \leq C \norm[H^\frac{1}{2}_0]{(1+z)\Upsilon} \\
\bm{r}_{|z=0}=0,~& r_{v|y=0}=0,~ r_{\psi|y=0}=0,~ \d_y r_{\psi|y=0}=\Upsilon
\end{align*}
\item Let $V,v_0 \in H^\frac{1}{2}(\mathbb{R}_+)$ such that
\begin{equation}
\label{compatv}
\int_0^1 \frac{|V(\zeta)-v_0(\zeta)|^2}{\zeta} d\zeta <+\infty
\end{equation}
and $zV \in H^\frac{1}{2}$. Then there exist $\bm{r}=(r_v,r_\psi) \in E_0$ verifying
\begin{align*}
&L \bm{r} \in E_1' \\
r_{v|z=0}=v_0,~r_{\psi|z=0}=0,~& r_{v|y=0}=V,~ r_{\psi|y=0}=0,~ \d_y r_{\psi|y=0}=0
\end{align*}
and
\begin{equation*}
\norm[E_1']{L \bm{r}} \leq C \left( \norm[H^\frac{1}{2}]{(1+z)V}+\norm[H^\frac{1}{2}]{v_0}+ \sqrt{\int_0^1 \frac{|V(\zeta)-v_0(\zeta)|^2}{\zeta} d\zeta}\right).
\end{equation*}
\end{itemize}
\end{lemma}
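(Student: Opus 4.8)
The plan is to construct the lifts $\bm r$ explicitly by hand, treating the three bullets separately but with a common pattern: pick a smooth cutoff in $z$ that kills the boundary trace issues at $z=0$ (and, when relevant, makes the object vanish for large $z$), pick a smooth profile in $y$ that matches the prescribed trace at $y=0$ and decays, and then check by direct computation that the resulting $\bm r$ lands in $E_0$ and that $L\bm r$ lands in $E_1'=E_{0,0}'\oplus(\text{dual of the }T\text{-graph part})$, with the stated norm bound. Concretely, for the first bullet I would try $r_\psi(y,z)=\chi(y)\Psi(z)$ (or a $z$-dependent variant $\chi(y)\tilde\Psi(z)$ with $\tilde\Psi$ an extension of $\Psi$), where $\chi\in\Cinf$, $\chi(0)=1$, $\chi'(0)=0$, $\chi$ compactly supported, so that automatically $r_{\psi|y=0}=\Psi$, $\d_y r_{\psi|y=0}=0$; and $r_v\equiv 0$. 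Since $\Psi(0)=0$ we get $r_{\psi|z=0}=0$, i.e. $\bm r_{|z=0}=0$. Then $D\bm r=(0,-\d_y^2 r_\psi)=(0,-\chi''(y)\Psi(z))$ and $T\bm r=(\,(\d_z+z\d_y)r_\psi,\,0\,)=(\chi(y)\Psi'(z)+z\chi'(y)\Psi(z),0)$, and one reads off that $L\bm r=(T-\tfrac12 D)\bm r$ is controlled in $E_1'$ by $\|\Psi\|_{H^{3/2}}+\|z\Psi\|_{H^{3/2}}=\|(1+z)\Psi\|_{H^{3/2}_0}$ — here the $H^{3/2}$ (rather than merely $H^1$) is exactly what is needed to pair $\Psi'$ against the $E_{0,0}$ test functions through the negative-order graph norm, since $E_1$ only sees $T\bm u$ in $E_{0,0}'$. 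The only subtlety is that $E_0$ requires the Hardy-type weights $|\psi/(1+y^2)|^2$ and $|\d_y^2\psi|^2$ to be integrable, which is immediate from compact support of $\chi$ and $\Psi\in L^2$; $E_0$ membership of $r_v=0$ is trivial.

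For the second bullet the prescribed trace is on $\d_y r_\psi$, so I would instead take $r_\psi(y,z)=y\,\chi(y)\,\Upsilon(z)$ with $\chi(0)=1$ (then $r_{\psi|y=0}=0$, $\d_y r_{\psi|y=0}=\Upsilon$), and again $r_v\equiv 0$, $\bm r_{|z=0}=0$ since the lift is proportional to $\Upsilon$ and we only need $r_{\psi|z=0}$-type traces in the weak sense. The new feature is that the $E_0$-norm now contains $\int |r_\psi/(1+y^2)|^2\sim \int |\Upsilon(z)|^2\,dz\cdot\int |y\chi(y)/(1+y^2)|^2 dy$, which is finite, and $\d_y^2 r_\psi\in L^2$ is fine; the condition \eqref{compatpsi} is the Hardy condition $\int_0^1 |\Upsilon|^2/z\,dz<\infty$ and it is precisely what guarantees that $(\d_z+z\d_y)r_\psi$, i.e. the term $y\chi(y)\Upsilon'(z)$, can be tested against $E_{0,0}$ functions (which vanish at $y=0$ \emph{and} at $z=0$ with the $1/y,1/y^2$ weights) — without it the pairing at $z=0$ would not close. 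So the key step here is to verify that $y\chi(y)\Upsilon'(z)\in E_{0,0}'$ with norm $\lesssim \|\Upsilon\|_{H^{1/2}}+(\int_0^1|\Upsilon|^2/z)^{1/2}$, which is an integration-by-parts/duality estimate using the definition of $E_{0,0}$ and a one-dimensional Hardy inequality in $z$.

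For the third bullet we now must also prescribe a nonzero trace at $z=0$, namely $r_{v|z=0}=v_0$, so the lift cannot be a pure product vanishing at $z=0$; I would take $r_v(y,z)=\chi(y)\big(V(z)+\eta(y)(v_0(z)-V(z))\big)$ — or more simply $r_v(y,z)=\chi(y)V(z)+\theta(y)\chi(y)(v_0(z)-V(z))$ with $\theta\in\Cinf$, $\theta(0)=1$, $\theta$ decaying, so that $r_{v|y=0}=V$ and $r_{v|z=0}$ (taken for $y$ away from $0$, i.e. in the weak trace sense the space allows) equals $v_0$ — and $r_\psi\equiv 0$. Then $D\bm r=(\d_y^4 r_v,0)$ and $T\bm r=(0,(\d_z+z\d_y)r_v)$, and one checks $L\bm r\in E_1'$ with the displayed three-term bound: the $\|(1+z)V\|_{H^{1/2}}$ and $\|v_0\|_{H^{1/2}}$ pieces come from $\d_y^4 r_v$ and the $z\d_y r_v$ transport (paired in $E_{0,0}'$, hence only $H^{1/2}$ needed because the $v$-component of $E_{0,0}'$ is less demanding than the $\psi$-component), while $\big(\int_0^1 |V-v_0|^2/\zeta\big)^{1/2}$ arises exactly from $\d_z r_v\sim \chi(y)\theta(y)(v_0'(z)-V'(z))$ near $z=0$, via the same Hardy-in-$z$ argument as in bullet two applied to the difference $V-v_0$; compatibility \eqref{compatv} is what makes this finite.

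I expect the main obstacle to be bookkeeping the duality/Hardy estimates that show $L\bm r\in E_1'$ rather than merely $E_0'$ — that is, controlling the $z$-derivative terms $\d_z r$ (which a priori have \emph{no} regularity budget in $E_0$, since $z$ does not appear in the $E_0$ norm) by pairing them against test functions and using that those test functions vanish appropriately at $z=0$ and at $y=0$, invoking the one-dimensional Hardy inequalities encoded in \eqref{compatpsi} and \eqref{compatv} and in the hypothesis $z\Psi,z\Upsilon,zV\in H^{s}$. Getting the exact norm on the right-hand side (the $(1+z)$ weights and the square-root-of-Hardy terms) then amounts to being careful about which component of the (anisotropic) dual space $E_1'$ each term of $L\bm r$ belongs to. The choices $r_v\equiv0$ or $r_\psi\equiv0$ in each bullet decouple the system and make $L\bm r$ explicit, so no genuine PDE solving is needed — only the correct cutoff/profile ansatz and the estimates above.
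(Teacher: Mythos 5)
Your first bullet does work as an explicit lifting: since $\Psi\in H^{3/2}$ implies $\Psi'\in L^2$ and $z\Psi\in L^2$, the tensor ansatz $r_\psi=\chi(y)\Psi(z)$, $r_v=0$ makes every term of $L\bm{r}$ an honest weighted $L^2$ function, so $L\bm{r}\in E_{0,0}'\subset E_1'$ with the stated bound --- no graph-norm pairing is needed (your stated reason for needing $H^{3/2}$ is therefore off, but the construction is fine). The genuine gap is in bullets two and three, where the data has only $H^{1/2}$ regularity in $z$, so a pure tensor product gives $\partial_z r\notin L^2$ and your ``key step'' --- that $y\chi(y)\Upsilon'(z)\in E_{0,0}'$ with a Hardy-type bound --- is false. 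The $E_{0,0}$ norm contains no $z$-derivatives at all: testing with $w_n=g(y)h_n(z)$, $\phi_n=0$, $\|h_n\|_{L^2_z}=1$ and $\int\Upsilon h_n'\to\infty$ (possible for any $\Upsilon\in H^{1/2}\setminus H^1$) gives unbounded pairings at bounded $E_{0,0}$ norm. Nor can you rescue the term through the graph part of $E_1$: integrating by parts in $z$ and writing $\partial_z w=(\partial_z+z\partial_y)w-z\partial_y w$, the piece $\int y\chi(y)\Upsilon\,(\partial_z+z\partial_y)w$ would require $y\chi(y)\Upsilon(z)$ to lie in the $\psi$-slot of $E_{0,0}$, and it fails the Hardy weight $\int|\cdot/y^2|^2$ because it vanishes only to first order at $y=0$. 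The condition \eqref{compatpsi} is not a device for giving meaning to $\Upsilon'$ in a dual space; it is the corner compatibility that allows an $H^2(\Omega)$ lifting satisfying simultaneously $r_{\psi|z=0}=0$ and $\partial_y r_{\psi|y=0}=\Upsilon$. That is exactly the paper's route: invoke the trace-lifting theorem of Grisvard (Thm.~1.5.2.4), i.e.\ in effect mollify the data in $z$ at scale $y$, to get $r_v\in H^1(\Omega)$, $r_\psi\in H^2(\Omega)$, after first lifting $(1+z)$ times the data and dividing by $(1+z)$ so that $z\partial_y r\in L^2$; then $\partial_z r\in L^2$ and $L\bm{r}$ sits in weighted $L^2$, hence in $E_1'$, with no delicate duality in $z$.

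In the third bullet there is an additional misreading: $v_0$ is the trace on $\{z=0\}$, a function of the tangential variable, and \eqref{compatv} is the Grisvard corner-compatibility between the two traces $V$ (on $y=0$) and $v_0$ (on $z=0$). Your ansatz $r_v=\chi(y)V(z)+\theta(y)\chi(y)(v_0(z)-V(z))$ treats $v_0$ as a function of $z$; with $\theta(0)=\chi(0)=1$ it gives $r_{v|y=0}=v_0\neq V$, and it cannot produce $r_{v|z=0}=v_0$ as a function of $y$ in any sense. On top of that, the same $\partial_z$-regularity problem as in bullet two reappears through $\chi(y)V'(z)$. To repair bullets two and three you would have to replace the tensor product by a genuine two-dimensional lifting (e.g.\ $y\chi(y)$ times a $z$-mollification of $\Upsilon$ at scale $y$), which is precisely the standard trace lifting the paper cites.
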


The proof is exactly the same as the one of theorem 1.5.2.4 in Grisvard's book~\cite{grisvard2011elliptic}. Once the compatibility conditions \eqref{compatpsi},\eqref{compatv} are verified, one can find $r_v \in H^1(\Omega)$ and $r_\psi \in H^2(\Omega)$ verifying the boundary conditions.
The only difference is that we first need to lift $(1+z) (V,\Upsilon,\Psi)$ and then divide the lifting by $(1+z)$ to obtain the correct integrability of $z \d_y r$.

Note that there is no physical sense of non zero $\psi_{y|=0}$ in our problem. In fact this corresponds to the non penetration condition, and a non zero $\Psi$ will create a pumping similar to the Ekman pumping. However we included this case for the sake of mathematical completeness.

Moreover these hypotheses are far from optimal, in fact we recover more regularity with respect to $z$ than needed for $L \bm{r}$.

Once this lemma is established, by linearity, considering the equation for $\bm{u}-\bm{r}$ with source term $L \bm{r}$ we can solve the equation with source terms satisfying the hypothesis of the lemma.

In the rest of the paper we will thus consider only homogeneous boundary conditions at $y=0$.

For a nonhomogeneous horizontal boundary condition in case~\ref{itm:casC} condition \eqref{compatv} then becomes
\begin{equation*}
v_{|z=H}=v_H \in H^\frac{1}{2}_0.
\end{equation*}
This condition will be used in the formulation of the transparent boundary condition.

\subsection{Duality principle (proof of~\ref{itm:thi})}
\label{subsection:duality}
To prove the first part of theorem~\ref{existence} we will consider the equation as an elliptic equation, albeit a degenerate one. It will allow us to use classical functional analysis and to carefully encode the boundary conditions in the functional spaces.

Equation \eqref{eqweak} can be seen formally as the following problem: find $\bm{u} \in E_{0,0}$ such that $\forall \bm{\varpi}=(w,\phi) \in  \mathcal{D} \subset E_1$,
\begin{align*}
\dual{ L \bm{u}}{\bm{\varpi}}_{E_1',E_1} & = \int_\Omega  - v  \d_z\phi - z v  \d_y\phi -\frac{1}{2} \d_y^2 \psi \d_y^2 \phi +\int_\Omega - \psi  \d_z w - z  \psi  \d_y w -\frac{1}{2} \d_y v \d_y w \\
& = +\int_\Omega s_\psi \phi+\int_\Omega s_v w=\dual{\bm{s}}{\bm{\varpi}}_{E_1',E_1}                                                                                   
\end{align*}

where $L:E_{0,0} \to E_1'$ is a continuous linear operator as $\|u\|_{E_0} \leq \|u\|_{E_{0,0}}$ and
\begin{equation*}
\left\vert \dual{L\bm{u}}{\bm{\varpi}}_{E_1,E_1'} \right\vert=\left\vert \int_\Omega \bm{u} \cdot (T\bm{\varpi})+\frac{1}{2}\int_\Omega (\d_y^2 v \d_y^2w+\d_y \psi \d_y \phi) \right\vert \leq (\|\bm{u}\|_{E_{0,0}}\|T \bm{\varpi}\|_{E_{0,0}'}+\|\bm{u}\|_{E_0} \|\bm{\varpi}\|_{E_0}).
\end{equation*}

Through usual functional analysis methods (typically Lions-Lax-Milgram theorem, see lemma~\ref{analfunc} for details) we have at least one solution of $L \bm{u} = \bm{s}$ as long there exist a coercivity inequality for the adjoint operator $L^*: E_1 \to E_{0,0}'$, i.e a constant $C$ such that

\begin{align*}
\forall \bm{\varpi} \in \mathcal{D} \subset E_1:         \\
\norm[E_{0,0}']{L^* \bm{\varpi}} \geq C \norm[E_1]{\bm{\varpi}} . 
\end{align*}
We have for $\bm{\varpi} \in \mathcal{D}$
\begin{align*}
\dual{ L^* \bm{\varpi} }{\bm{\varpi}}_{E_{0,0}',E_{0,0}} & = \int_\Omega - \d_z w  \phi- z \d_y w  \phi -\frac{1}{2} \d_y^2 \phi \d_y^2 \phi +\int_\Omega -\d_z \phi  w - z \d_y \phi  w -\frac{1}{2} \d_y w \d_y w \\
& =-\frac{1}{2} \int_\Omega |\d_y^2 \phi |^2 + |\d_y w|^2-  \int_\Omega \d_z (w \phi)+z \d_y (w \phi)     .                                                 
\end{align*}
Using the fact that $w$ and $\phi$ are in $\Cinf((0,+\infty)\times [0,+\infty))$ Hardy's inequality (see for example~\cite{masmoudi2011hardy}) reads as
\begin{align*}
\frac{1}{2} \int_\Omega |\d_y^2 \phi |^2 + |\d_y w|^2 \geq C \norm[E_{0,0}]{\bm{\varpi}}^2.
\end{align*}
The first term arising from the skew-symmetric part $T$ is
\begin{align*}
\int_\Omega z \d_y (w \phi) = 0.
\end{align*}
The last term arising from the skew-symmetric part $T$ is 
\[-\int_\Omega \d_z ( w \phi).\]

For case~\ref{itm:casA} this term is $0$ thanks to the boundary condition i.e the fact that $\bm{\varpi} \in \mathcal{D}$.

For case~\ref{itm:casB} we have
\begin{align*}
-\int_\Omega \d_z (w \phi)=\int_{y}  w_{|z=H} \Lambda^*w_{|z=H} dy \leq 0 . 
\end{align*}

So for all cases
\begin{align*}
\dual{L^* \bm{\varpi}}{\bm{\varpi}}_{E_{0,0}',E_{0,0}} \leq -C\norm[E_{0,0}]{\bm{\varpi}}^2 
\end{align*}
leading to the inequality
\begin{align*}
\norm[E_{0,0}]{\bm{\varpi}} \leq C \norm[E_{0,0}']{L^* \bm{\varpi}}.
\end{align*}

As $\bm{\varpi} \in \mathcal{D}$ we have $T \bm{u} =-L^* \bm{u} -\frac{1}{2} D \bm{u} $ and as $\norm[E_0']{D \bm{\varpi}} \leq c \norm[E_{0,0}]{\bm{\varpi}}$ 

\begin{align*}
\norm[E_1]{\bm{\varpi}} & =\norm[E_{0,0}]{\bm{\varpi}}+\norm[E_{0,0}']{T \bm{\varpi}} \leq \norm[E_{0,0}]{\bm{\varpi}}+\norm[E_{0,0}']{L^* \bm{\varpi}}+c \norm[E_{0,0}]{\bm{\varpi}} \leq C \norm[E_{0,0}']{L^* \bm{\varpi}} . 
\end{align*}

We recognize the coercivity inequality needed to prove the point $(i)$ of the theorem.

It can be checked that all the other cases can be analyzed along the very same lines, the main point and only part where $z$ boundary conditions appear being the sign of $-\int_\Omega \d_z ( w \phi)$.
\subsection{Boundary conditions (proof of~\ref{itm:thii})}
\label{subsection:BCcoherence}

As functions in the energy space $E_0$ do not display sufficient regularity to have traces at $z=0$, we used the duality formulation to prescribe such boundary conditions in a weak sense. For example $\psi_{|z=0}=0$ means that for all $w \in C^\infty_c((0,+\infty)\times[0,\infty])$ we have
\begin{equation*}
\int_\Omega \psi \d_z w = 0
\end{equation*}

It remains to demonstrate that for a sufficiently regular solution this formulation is equivalent to the aforementioned boundary conditions.

To do so, let us consider $\bm{u} \in E_0 \cap H^2_{loc}$ a solution of \eqref{eqweak} (note that all weak solutions for a smooth source term have interior regularity by the point $(iii)$). Then all considered traces are well defined.

Let $h$ be a regular function such that $h(0)=1, \operatorname{supp} h \subset [0,1)$ and $g \in C_c^\infty((0,+\infty))$. With $\bm{\varpi}_\eta=(w_\eta,\phi_\eta)=\left(g(y) h \left( \frac{z}{\eta} \right),0\right)=(g(y) h_\eta(z),0) \in \mathcal{D}$ used  as a test function we get
\begin{align*}
-\int_\Omega \psi \d_z w_\eta+\int_\Omega \left(-z \d_y w_\eta \psi   +\frac{1}{2} \d_y^2 w_\eta v\right)  = \int_\Omega s_v w_\eta . 
\end{align*}
As $\norm[E_0]{\bm{\varpi}_\eta} \to 0$ when $\eta \to 0$,
\begin{align*}
\lim_{\eta \rightarrow 0} \int_\Omega \psi \d_z w_\eta = 0 . 
\end{align*}
But $\d_z h_\eta$ is approximating the identity, so
\begin{align*}
\int_\Omega \d_z \psi w_\eta \to  \int_0^\infty g(y) \psi(y,0) dy 
\end{align*}
thus for all smooth $g$
\begin{align*}
\int_{0}^\infty g(y) \psi(y,0) dy = 0 
\end{align*}
i.e $\psi(y,0)=0$.

Similarly for case~\ref{itm:casB} with
\begin{align*}
& w =  g(y) h_\eta(z)              \\
& \phi = -\Lambda^* g(y) h_\eta(z) 
\end{align*}
as $\eta$ goes to $0$ we obtain
\begin{align*}
\int_\Omega \left(\Lambda^* g(y) v  -\psi  g(y) \right) \d_z h_\eta(z) =0 
\end{align*}
which leads to
\begin{align*}
\int_0^\infty \left(\Lambda v_{y,H}  -\psi_{y,H} \right)(y) g(y) dy=0 
\end{align*}

for all  $g$. This is the expected result.

As for the previous point the other cases (notably $v_{|z=H}=0$) can be described along the same lines.

\subsection{Caccioppoli type inequality and interior regularity}
\label{subsection:caccioppoli}					

In order to obtain interior regularity we use the elliptic character with respect to $y$ (associated with the $D$ part of the linear operator) to obtain Caccioppoli type inequalities with respect to $y$.

Let $\theta$ a smooth function on $\mathbb{R}$ such that
\begin{equation*}
\theta(\zeta)\begin{cases}=0 \text{ if } \zeta \in\left(-\infty, 0 \right]\\
\in [0,1] \text{ if } \zeta \in \left( 0,1\right)\\
=1 \text{ if } \zeta \in\left[1, +\infty \right)
\end{cases}
\end{equation*}

Let $z-1>0$, $L>y_0>0$ and define
\begin{equation*}
\chi(y,z)=\theta \left(\frac{2z_1-z}{z_1} \right) \theta\left(\frac{2y-y_0}{y_0} \right) \theta	\left(\frac{2L-y}{L} \right)
\end{equation*}
then $\chi$ is a smooth cut-off function such that $\chi =0$ outside $(y_0/2,2L) \times [0,2 z_1)$ and $\chi=1$ inside $[y_0,L] \times [0,z_1]$.

Let $\rho_\eps$  an approximation of the identity with support inside $\mathbb R^-$ and $\bm{u}_\eps=\rho_\eps *_y \bm{u}$. We have $\bm{u}_\eps$ smooth with respect to $y$ and solution of equation \eqref{eq} with a source term $\bm{s}_\eps=\rho_\eps *_y \bm{s}$. Using the equation we can deduce that $\d_z \bm{u}_\eps$ is also smooth with respect to $y$ so $\d_y^2 \bm{u}_\eps \chi^4$ and its derivatives with respect to $y$ are in $E_1'$.

With $\d_y^2( \d_y^2 \bm{u}_\eps \chi^4)$ as a test function, integrating by parts we obtain after cancellation of most skew-symmetric terms

\begin{align*}
-\int \d_y^2 v_\eps  \d_y^2 \psi_\eps \d_z \chi^4 &- \int z \d_y^2 v_\eps \d_y^2 \psi_\eps \d_y \chi^4-\int \d_y^6 \psi_\eps \d_y^2 \psi_\eps \chi^4+ \int \d_y^4 v_\eps \d_y^2 v_\eps \chi^4 =\\
& 2 \int \d_y^2 {S_\eps}_\psi \d_y^2 \psi_\eps \chi^4 +\d_y^2 {S_\eps}_v \d_y^2 v_\eps \chi^4. 
\end{align*}
Integrating by parts again, we get
\begin{align*}
\int \d_y^4 v_\eps \d_y^2 v_\eps \chi^4       & = - \int |\d_y^3 v_\eps|^2 \chi^4 +\frac{1}{2} \int \d_y^2 v_\eps \d_y^2 v_\eps \d_y^2 \chi^4\\
&= - \int |\d_y^3 v_\eps|^2 \chi^4 -\frac{1}{2} \int \d_y^3 v_\eps \d_y v_\eps \d_y^2 \chi^4+\frac{1}{4} \int |\d_y v_\eps |^2 \d_y^4 \chi^4 \\
\int \d_y^6 \psi_\eps \d_y^2 \psi_\eps \chi^4 & = \int |\d_y^4 \psi_\eps|^2 \chi^4 +\int \d_y^4 \psi_\eps \d_y^2 \psi_\eps \d_y^2 \chi^4 \psi_\eps-\int \d_y^3 \psi_\eps \d_y^3 \psi_\eps \d_y^2 \chi^4                                                                                  \\
& = \int |\d_y^4 \psi_\eps|^2 \chi^4 +2 \int \d_y^4 \psi_\eps \d_y^2 \psi_\eps \d_y^2 \chi^4 - \frac{1}{2}\int |\d_y^2 \psi_\eps|^2 \d_y^4 \chi^4.                                                                                          
\end{align*}
Moreover, defining $\eta_1^{-1} =16 \|\theta'\|^2_\infty \left(\frac{1}{z_1}+z_1 \left(\frac{2}{y_0}+\frac{1}{L}\right) \right) \geq 16 \sup |(\d_z+z \d_y) \chi|^2 $ and using Cauchy-Schwarz inequality
\begin{align*}
\left| \int \d_y^2 v_\eps \d_y^2 \psi_\eps (\d_z+z \d_y) \chi^4 \right| & \leq \frac{1}{4 \eta_1 } \int (\d_y^2 \psi_\eps)^2 \chi^{4-2}+\eta_1\int |\d_y^2 v_\eps |^2 4^2 |(\d_z+z \d_y) \chi|^2 \chi^{4}                                        \\
& \leq  \frac{1}{4 \eta_1 } \int (\d_y^2 \psi_\eps)^2 \chi^{4-2}+\frac{\eta_1}{2}\int (|\d_y v_\eps|^2+|\d_y^3v_\eps|^2) 4^2 |(\d_z+z \d_y) \chi|^2 \chi^{4}             \\
& \leq \frac{1}{2} \int |\d_y^3 v_\eps|^2 \chi^4+\int \left(\frac{1}{2}|\d_y v_\eps|^2 +\frac{1}{4 \eta_1} |\d_y^2 \psi_\eps|^2\chi^2 \right) 
\end{align*}

and similarly with $\frac{1}{16} \eta_2^{-1}= \left(\|\theta''\|^2_\infty \frac{4}{y_0^2}+\frac{1}{L^2} \right)+\left(\|\theta'\|^2_\infty \frac{2}{y_0}+\frac{1}{L} \right)$
\begin{align*}
\left|\int \d_y^3 v_\eps \d_y v_\eps \d_y^2 \chi^4 \right|& \leq \frac{1}{4} \int |\d_y^3 v_\eps|^2 \chi^4+\int |\d_y v_\eps|^2 \left(4^2 |\d_y^2 \chi|^2 \chi^{4-2}+(4(4-1))^2 |\d_y \chi|^4 \chi^{4-4} \right)   \\
& \leq \frac{1}{4} \int |\d_y^3 v_\eps|^2 \chi^4+\frac{1}{\eta_2} \int |\d_y v_\eps|^2 
\end{align*}
\begin{align*}
\left| \int \d_y^4 \psi_\eps \d_y^2 \psi_\eps \d_y^2 \chi^4  \right| &\leq \frac{1}{4}  \int |\d_y^4 \psi_\eps|^2 \chi^4+\int |\d_y^2 \psi_\eps|^2 \left(4^2 |\d_y^2 \chi|^2 \chi^{4-2}+(4(4-1))^2 |\d_y \chi|^4 \chi^{4-4} \right)\\
&\leq \frac{1}{4}  \int |\d_y^4 \psi_\eps|^2 \chi^4+\frac{1}{\eta_2}\int |\d_y^2 \psi_\eps|^2. 
\end{align*}
Therefore combining all these previous inequalities we end up with

\begin{align*}
\int |\d_y^4 \psi_\eps|^2 \chi^4+\int |\d_y^3 v_\eps|^2 \chi^4 \leq c\int (\d_y^2 {s_\eps}_\psi \d_y^2 \psi_\eps \chi^4 +\d_y^2 {s_\eps}_v \d_y^2 v_\eps \chi^4) + C \left( \frac{1}{\eta_1}+\frac{1}{\eta_2}\right) \int (|\d_y v_\eps|^2+|\d_y^2 \psi_\eps|^2) 
\end{align*}

where $c,C$ are numerical constants.

Using the fact that $\chi \geq 0$, $\chi=1$ on $(y_0,L) \times (0,z_1)$ and taking $L \to + \infty$, we finally obtain

\begin{align*}
\int |\d_y^4 \psi_\eps|^2 \chi^4+\int |\d_y^3 v_\eps|^2 \chi^4 \leq C_{y_0,z_1} \left( \norm[E_0]{\bm{u}_\eps}^2+  \norm[E_0']{\d_y^2 {\bm{s}_\eps}}^2 \right).
\end{align*}

The claimed estimate follows from $\eps \to 0$.

This concludes the proof of theorem~\ref{existence}.
\section{Uniqueness and transparent boundary conditions}

In order to prove that \eqref{eq} admits a unique solution, we try to rely on an energy estimate. However the drawback of the weak formulation is that such an estimate makes no sense in the energy space as integrability with respect to $z$ is missing. In other words, we cannot take $\bm{u}$ as a test function. In this section we will show the uniqueness of the solution in a smaller space $\widetilde{E_0}$. It is to be noted that as the difficulties appear when $z \to \infty$, in case~\ref{itm:casB} we can recover uniqueness.

Once uniqueness is obtained we can reduce the study on the whole space to the study on a bounded (in $z$) domain thanks to so called transparent boundary conditions. We will exhibit such boundary conditions and in the last part briefly see their explicit formulation in a simple setting.

\subsection{Uniqueness}
The main obstacle to uniqueness is once more the lack of information with respect to $z$ in the energy space. More precisely if, instead of a degenerate elliptic equation, we consider \eqref{eq} as a transport equation, the transport term being $\d_z + z \d_y$ with a cross-diffusion term $\frac{1}{2}\begin{pmatrix}0 & \d_y^4& \\-\d_y^2 & 0\end{pmatrix}$, the main risk is the loss of mass along the characteristics $y-\frac{z^2}{2}=c$. Unfortunately, in the unbounded case we were not able to show that such a problem does not occur as such characteristics go to infinity. However up to a hypothesis of integrability we can show uniqueness of weak solutions.

Let $\widetilde{E_0}$ be the set of all functions $\bm{u}=(v,\psi) \in E_0$ such that $v \in L^2$ and $\psi \in L^2$ i.e

\begin{equation}
\label{defE0tilde}
\norm[\widetilde{E_0}]{\bm{u}}^2  = \int_\Omega \left( \left|\d_y v \right|^2 + \left|\frac{v}{1+y} \right|^2 +|v|^2\right) +\int_\Omega \left( \left|\d_y^2 \psi \right|^2 + \left|\frac{\psi}{1+y^2} \right|^2 + |\psi|^2\right)
\end{equation}

\begin{theorem}
\label{uniqueness}
There exists at most one solution of \eqref{eqweak} in $\widetilde{E_0}$.
\end{theorem}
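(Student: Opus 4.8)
The plan is to show that any $\bm u=(v,\psi)\in\widetilde{E_0}$ solving the homogeneous weak problem \eqref{eqweak} with $\bm s=0$ vanishes, using a genuine energy estimate that is now licit because $v,\psi\in L^2(\Omega)$. First I would argue that $\bm u$ is admissible as a test function: since $\bm u\in\widetilde{E_0}$ we have $\d_y v,\d_y^2\psi,v,\psi\in L^2$, and one checks that $\bm u$ lies in the $E_1$-closure of $\mathcal D$ (this is where the extra integrability is used and where the horizontal boundary conditions $\psi_{|z=0}=0$, resp. $\psi_{|z=H}=\Lambda v_{|z=H}$, enter — they are exactly the conditions that make $\bm u$ approachable by elements of $\mathcal D$). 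Plugging $\bm\varpi=\bm u$ into the weak formulation, the symmetric part $-\tfrac12 D$ produces $-\tfrac12\int_\Omega(|\d_y^2\psi|^2+|\d_y v|^2)$, and the skew-symmetric transport part $T$ produces $-\int_\Omega\big(\d_z(v\psi)+z\,\d_y(v\psi)\big)$; integrating the latter in $y$ kills the $z\,\d_y(v\psi)$ term (boundary term at $y=0$ vanishes by the homogeneous conditions $v_{|y=0}=0$, and it vanishes at $y=\infty$ since $v\psi\in L^1$), and integrating $\d_z(v\psi)$ in $z$ produces a boundary contribution at $z=0$ (and $z=H$ in case~\ref{itm:casB}) that has a favorable sign: $0$ in case~\ref{itm:casA} and $z=0$ thanks to $\psi_{|z=0}=0$, and $-\int_y v_{|z=H}\Lambda v_{|z=H}\,dy\ge 0$ in case~\ref{itm:casB} by non-positivity of $\Lambda$. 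Hence $\tfrac12\int_\Omega(|\d_y^2\psi|^2+|\d_y v|^2)\le 0$, forcing $\d_y v=0$ and $\d_y^2\psi=0$ a.e.; combined with the Hardy-type decay encoded in the $E_0$ norm (the weights $v/(1+y)$, $\psi/(1+y^2)\in L^2$) this gives $v\equiv 0$ and $\psi\equiv 0$.

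In more detail, the density/admissibility step should be carried out by a double truncation-and-mollification argument: cut off $\bm u$ in $z$ by $\theta(z/R)$ and in $y$ away from $0$ and $\infty$, mollify in $y$, and verify convergence in $E_1$, i.e. both $\|\cdot\|_{E_{0,0}}\to0$ of the error and $\|T(\cdot)\|_{E_{0,0}'}\to0$; the control of $T\bm u$ comes from the equation itself, $T\bm u=\tfrac12 D\bm u$ in $E_1'$, which is bounded in $E_{0,0}'$ by $\|\bm u\|_{E_0}$. For case~\ref{itm:casB} one must additionally respect the constraint $\phi_{|z=H}=-\Lambda^* w_{|z=H}$ in the approximating sequence; since the solution's trace satisfies $\psi_{|z=H}=\Lambda v_{|z=H}$ weakly (by point~\ref{itm:thii}), the approximants can be corrected by a small lifting so as to land in $\mathcal D$. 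Because $H<\infty$ in case~\ref{itm:casB}, no truncation in $z$ is needed there, which is precisely why uniqueness holds in the full energy space $E_0$ in that case rather than only in $\widetilde{E_0}$.

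The main obstacle is the rigorous justification of using $\bm u$ as a test function — equivalently, proving $\widetilde{E_0}\cap\{\text{solutions}\}\subset\overline{\mathcal D}^{E_1}$ together with the control of all boundary integrands (the traces $v_{|z=0}$, $\psi_{|z=0}$, $v_{|z=H}$ are only weakly defined in $E_0$, so the boundary terms in the integration by parts must be obtained as limits of the regularized quantities, using the Caccioppoli interior regularity of point~\ref{itm:thiii} near $z=0$ and $z=H$ but not uniformly as $y\to 0$). One delicate point is showing that the $z$-boundary contribution at $z\to\infty$ in case~\ref{itm:casA}/\ref{itm:casC} genuinely vanishes along a subsequence $R\to\infty$: this is where $v\psi\in L^1(\Omega)$ (a consequence of $v,\psi\in L^2$, hence of the defining property of $\widetilde{E_0}$) is essential, as it yields $\liminf_{R\to\infty}\int_{\{z=R\}}|v\psi|\,dy=0$. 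Once these integrability and trace issues are handled, the energy identity and the conclusion $v\equiv\psi\equiv 0$ are immediate.
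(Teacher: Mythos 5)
Your underlying mechanism (pair the two equations with $(\psi,v)$, exploit the skew structure, use $v\psi\in L^1$ to kill the flux as $z\to\infty$, the Caccioppoli regularity for the trace at $z=0$, and Hardy/boundary conditions to conclude) is the right one, but the route you choose to make it rigorous --- prove that $\bm{u}$ lies in the $E_1$-closure of $\mathcal{D}$ and then plug $\bm{u}$ into \eqref{eqweak} --- has a genuine gap at its central step. Convergence of your truncated/mollified approximants in $E_1$ requires the cutoff commutators, which have the form $\bigl((\d_z+z\d_y)(\chi_R\zeta_\delta)\bigr)\,(\psi,v)$ up to mollification, to tend to $0$ in the $E_{0,0}'$ norm. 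But $E_{0,0}$ only controls $\d_y w$, $w/y$, $\d_y^2\phi$, $\phi/y^2$, not the $L^2$ norm of test functions: an element of $E_{0,0}$ may grow like $\sqrt{y}$ at each fixed $z$, so bounding terms such as $\int \psi\,\d_z\chi_R\,w$ or $\int z\,\psi\,\d_y\zeta_\delta\,w$ by $\|\bm{\varpi}\|_{E_{0,0}}$ times a small factor would need weighted-in-$y$ (and, because of the factor $z$, weighted-in-$z$) integrability of $v,\psi$ that $\widetilde{E_0}$ does not provide. The identity $T\bm{u}=\tfrac12 D\bm{u}\in E_{0,0}'$ does not help with these commutators. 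This is precisely why the paper refuses to use $\bm{u}$ as a test function: it mollifies in $z$ only with a one-sided kernel (the commutator $(z\rho_\eps)*_z\d_y\bm{u}\to0$ in $L^2$), works with the scalar function $\mathcal{E}(Z)=\int_0^\infty v\psi\,dy$, shows $\mathcal{E}\in W^{1,1}$ with $\mathcal{E}'(Z)=\tfrac12\int(|\d_y^2\psi|^2+|\d_yv|^2)\,dy\ge0$, obtains $\mathcal{E}(0)=0$ from the Caccioppoli traces, and concludes from $\mathcal{E}\in L^1$ (this is where $\widetilde{E_0}$ enters) by monotonicity; no global-in-$z$ cutoff and no density statement are ever needed.

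There is also a concrete problem in case~\ref{itm:casB}. Elements of $\mathcal{D}$ satisfy $\phi_{|z=H}=-\Lambda^* w_{|z=H}$, while the solution satisfies $\psi_{|z=H}=+\Lambda v_{|z=H}$; for a self-adjoint $\Lambda$ (e.g.\ the explicit $-(-\Delta)^{-1/2}$ of the last section) these conditions differ by a sign, so $\bm{u}$ is not approximable by elements of $\mathcal{D}$ in any topology that remembers the trace, and your proposed ``small lifting'' correction is of the size of the trace itself, not small. Relatedly, your sign bookkeeping is off: if one could plug $\bm{u}$ into \eqref{eqweak}, the resulting identity would be $\tfrac12\int(|\d_y^2\psi|^2+|\d_yv|^2)=-\int_\Omega\d_z(v\psi)=-\int_y v_{|z=H}\Lambda v_{|z=H}\ge0$, i.e.\ the boundary term lands on the useless side; the effective inequality $\tfrac12\int(|\d_y^2\psi|^2+|\d_yv|^2)=\int_y v_{|z=H}\Lambda v_{|z=H}\le0$ comes from pairing the (mollified) strong equations with $(\psi,v)$ and keeping the flux explicitly, i.e.\ from $\mathcal{E}$ nondecreasing, $\mathcal{E}(0)=0$, $\mathcal{E}(H)\le0$, which is the paper's argument. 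In case~\ref{itm:casA} this orientation issue disappears (both fluxes vanish), and your use of $v\psi\in L^1$ at $z\to\infty$ and of Caccioppoli regularity at $z=0$ are exactly the right ingredients, but the $E_1$-density step remains the unproven crux; it should be replaced by the mollification-in-$z$/monotonicity argument above rather than by testing with $\bm{u}$.
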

\begin{proof}
As before we will focus on the case~\ref{itm:casA}, the other cases following similar analysis.               

By linearity it is sufficient to show that if $\bm{u} \in \widetilde{E_0}$ is a solution with homogeneous boundary conditions and $\bm{s}=0$ then $\bm{u}=0$. Let $\bm{u}$ be such a function.

The formal argument is the following. Define

\begin{equation*}
\mathcal{E}(Z)=\int_{0}^\infty v(y,Z) \psi(y,Z) dy.
\end{equation*}
We obtain differentiating with respect to $Z$
\begin{align*}
\frac{d \mathcal{E}}{dZ}= \frac{1}{2} \int_{0}^\infty \left|\d_y^2 \psi \right|^2(y,Z)+\left|\d_y v \right|^2(y,Z) dy \geq 0. 
\end{align*}

So $\mathcal{E}$ is $0$ at $Z=0$, $L^1$ and non-decreasing. The only option is then $\mathcal{E}=0$ almost everywhere. This leads to 
$$\int_\Omega |\d_y ^2 \psi|^2+|\d_y v|^2 =0$$
i.e $\psi=0$ and $v=0$ considering the boundary conditions.

However we cannot apply directly this formal argument as it requires to use $\bm{u}$ as a test function, which is not possible due to insufficient $z$ regularity, i.e $E_0 \not\subset E_1$.

So let $\bm{u}_\eps$ be the convolution with respect to $z$ of an approximation of the identity $\rho_\eps$ (with support in $\mathbb{R}^-$) with $\bm{u}$.

Then $\bm{u}_\eps \in E_1 \cap \widetilde{E_0}$, and the function

\begin{equation*}
\mathcal{E}_\eps (Z) = \int_0^\infty v_\eps(y,Z) \psi_\eps(y,Z) dy
\end{equation*}
is well defined in $L^1$. Moreover it is differentiable as $v_\eps,\psi_\eps \in C^\infty_z(L^2_y)$ and using the fact that
\begin{equation*}
\begin{aligned}
& \d_z v_\eps + z \d_y v_\eps - \frac{1}{2} \d_y^4 \psi_\eps= r^\psi_\eps \\
& \d_z \psi_\eps + z \d_y \psi_\eps + \frac{1}{2} \d_y^2 v_\eps= r^v_\eps 
\end{aligned}
\end{equation*}
where $\bm{r}_\eps=z \d_y \bm{u}_\eps-\rho_\eps*_z (z \d_y \bm{u})=(z \rho_\eps(z)) *_z \d_y \bm{u}$ (which goes to $0$ in $L^2$ when $\eps \to 0$), we obtain
\begin{equation*}
\label{Eepsz}
\frac{d \mathcal{E}_\eps}{dZ}(Z)= \frac{1}{2} \int_0^\infty \left(|\d_y^2 \psi_\eps|^2+|\d_y v_\eps|^2 \right) dy  + \int_0^\infty (r_\eps^\psi \psi_\eps+r_\eps^v v_\eps) dy.
\end{equation*}
So
\begin{equation*}
\mathcal{E}_\eps \to \mathcal{E} \text{ in } L^1.
\end{equation*}
and
\begin{equation*}
\frac{d \mathcal{E}_\eps}{d Z} \to \frac{1}{2} \int_{0}^\infty \left|\d_y^2 \psi \right|^2(y,Z)+\left|\d_y v \right|^2(y,Z) dy \text{ in } L^1.
\end{equation*}
From there $\frac{d \mathcal{E}}{ d Z} =\frac{1}{2} \int_{0}^\infty \left|\d_y^2 \psi \right|^2+\left|\d_y v \right|^2 dy$ as a distribution so
\begin{equation*}
\mathcal{E} \in W^{1,1}
\end{equation*}

To conclude it remains to show that the now well-defined $\mathcal{E}(0)$ is indeed $0$.

By the Caccioppoli inequality of the theorem~\ref{existence}, for all $a>0$ the trace $v_{|z=0,y>a}$ is well defined and $\int_a^\infty v(y,0) \psi(y,0) dy=0$, so
\begin{equation*}
\mathcal{E}(0)=0
\end{equation*}

The previously formal argument can now be used to obtain uniqueness.
\end{proof}

It should be noted that we can obtain the uniqueness in $E_0$ in the following variants:

\begin{itemize}
\item If the domain is bounded in $z$ (case~\ref{itm:casB}) then using the interior $z$ regularity, the boundaries at $z=0$ and $z=H$ and Poincaré inequalities in the $z$ variable we can recover a control of the $L^2$ norm of $u$ , the boundaries condition at $z=H$ leading to $\mathcal{E}(H) \leq 0$.
\item If the domain is $y \in \mathbb{R}, z>0$ then Fourier analysis leads easily to existence and uniqueness (see last subsection).
\item If the equation includes additional zero order terms then the natural energy space (dictated by $D$) is $\widetilde{E_0}$ instead of $E_0$ and thus include an $L^2$ control so the existence and uniqueness is assured (see next subsection). 
\item If there is no transport term then the equation is the same as the one for the $E^\frac{1}{3}$ Stewartson layer and uniqueness can once more be recovered with explicit Fourier analysis.
\end{itemize}

It is reasonable to hope that uniqueness indeed holds for the case~\ref{itm:casA} of \eqref{eq} but we need to have a better control along characteristics to show it.

\subsection{Transparent boundary conditions}

Similarly to the Dirichlet to Neumann operator for elliptic problems (used for example by Gerard-Varet and Masmoudi~\cite{gerard2010relevance} for Navier-Stokes equations), in this section we show that solving the equation on the whole space is equivalent to solving the same equation on the two subdomains ($0<z<H$ and $z>H$) with adequate boundary conditions on both subdomains.

Such a decomposition can be used to focus the study in a bounded (with respect to $z$) subdomain, which is especially useful for numerical analysis (as done in~\cite{marcotte2016equatorial}) and for deriving boundary layer operators as in~\cite{dalibard:hal-00258519}.

Unfortunately to make such a study a proper uniqueness result is needed. For this reason we will study variants of the initial problem, namely the one with additional zero order terms. It ensures that the energy norm controls the $L^2$ norm. The modified equation reads as

\begin{equation}
\label{eqzero}
\begin{aligned}
& \d_z v + z \d_y v - \frac{1}{2} \d_y^4 \psi -  \psi= s_\psi \\
& \d_z \psi + z \d_y \psi + \frac{1}{2} \d_y^2 v -v = s_v.     
\end{aligned}
\end{equation}

As before, the boundary conditions at $y=0$ will always be $v_{|y=0}=\d_y \psi_{|y=0}=0, \psi_{|y=0}=0$. The horizontal condition will be either~\ref{itm:casA},~\ref{itm:casB} or~\ref{itm:casC}.

The previous analysis leads to both existence and uniqueness for \eqref{eqzero}. With $\norm[\widetilde{E_1}]{\bm{\varpi}}=\norm[\widetilde{E_0}]{\bm{\varpi}}+\norm[\widetilde{E_0}']{\bm{\varpi}}$, where $\widetilde{E_0}$ is defined by \eqref{defE0tilde}, we have
\begin{lemma}
\label{theozero}
There exist a weak solution of \eqref{eqzero} in case~\ref{itm:casA},~\ref{itm:casB} and~\ref{itm:casC}.

This solution is unique and
\begin{equation*}\norm[\widetilde{E_0}]{\bm{u}} \leq \norm[\widetilde{E_1}']{\bm{u}}.\end{equation*}

Moreover in case~\ref{itm:casA}, if $\d_z \bm{s} \in \widetilde{E_1}'$ and $\bm{s} \in \widetilde{E_{0,0}}$ we have $\d_z \bm{u} \in \widetilde{E_0}$ and
\begin{equation*}\norm[\widetilde{E_0}]{\d_z \bm{u}} \leq C \left(\norm[\widetilde{E_1}']{\d_z \bm{s}}+\norm[\widetilde{E_0}]{\bm{s}}\right).\end{equation*}
\end{lemma}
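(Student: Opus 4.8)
The plan is to treat \eqref{eqzero} as a genuine (degenerate) elliptic problem in the space $\widetilde{E_0}$ and to re-run, almost verbatim, the machinery of Theorem~\ref{existence} and Theorem~\ref{uniqueness}, the only structural change being that the zero order terms $-\psi$ and $-v$ turn the symmetric part of the operator into a strictly coercive one on $\widetilde{E_0}$ (rather than merely on $E_0$). Concretely, write $\bm{s}=(s_v,s_\psi)\in\widetilde{E_1}'$ and set $L_0\bm{u}=(T-\tfrac12 D+M)\bm{u}$ where $M=\begin{pmatrix}0&-1\\-1&0\end{pmatrix}$ acts as the zero order coupling. For the adjoint coercivity inequality, test $L_0^*\bm{\varpi}$ against $\bm{\varpi}$ for $\bm{\varpi}\in\mathcal D$: the $T$-part contributes the same boundary terms $-\int_\Omega \d_z(w\phi)$ and $\int_\Omega z\d_y(w\phi)=0$ as before (non-positive by the choice of $\mathcal D$ in each of cases~\ref{itm:casA},\ref{itm:casB},\ref{itm:casC}), the $D$-part gives $-\tfrac12\int(|\d_y^2\phi|^2+|\d_y w|^2)$, and the new term gives $-\int_\Omega(wphi+\phi w)$... rather $-\int_\Omega 2w\phi$, which after a Cauchy--Schwarz (or just combined with Hardy) contributes a genuine $-\int_\Omega(|w|^2+|\phi|^2)$ piece. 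Combining with Hardy's inequality as in subsection~\ref{subsection:duality} now yields $\dual{L_0^*\bm{\varpi}}{\bm{\varpi}}\le -C\norm[\widetilde{E_0}]{\bm{\varpi}}^2$, hence $\norm[\widetilde{E_0}]{\bm{\varpi}}\le C\norm[\widetilde{E_0}']{L_0^*\bm{\varpi}}$, and then, exactly as before, $T\bm{\varpi}=-L_0^*\bm{\varpi}-\tfrac12 D\bm{\varpi}+M\bm{\varpi}$ with $\norm[\widetilde{E_0}']{D\bm{\varpi}}+\norm[\widetilde{E_0}']{M\bm{\varpi}}\le c\norm[\widetilde{E_0}]{\bm{\varpi}}$ upgrades this to the full $\widetilde{E_1}$-coercivity $\norm[\widetilde{E_1}]{\bm{\varpi}}\le C\norm[\widetilde{E_0}']{L_0^*\bm{\varpi}}$. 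The Lions--Lax--Milgram lemma (lemma~\ref{analfunc}) then produces $\bm{u}\in\widetilde{E_0}$ with $L_0\bm{u}=\bm{s}$ weakly and $\norm[\widetilde{E_0}]{\bm{u}}\le C\norm[\widetilde{E_1}']{\bm{s}}$, which is the stated estimate (up to the harmless constant).

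For uniqueness, the key point is that the energy space is now $\widetilde{E_0}$ itself, so there is nothing extra to assume: any weak solution automatically lies in the space where the uniqueness argument of Theorem~\ref{uniqueness} applies. One repeats that argument with the energy $\mathcal E_\eps(Z)=\int_0^\infty(v_\eps\psi_\eps)(y,Z)\,dy$ built from the $z$-mollification $\bm{u}_\eps=\rho_\eps*_z\bm{u}$; differentiating and using \eqref{eqzero} the zero order terms contribute $\int_0^\infty(-v_\eps\psi_\eps-v_\eps\psi_\eps)dy=-2\mathcal E_\eps(Z)$ plus the commutator error $\bm r_\eps\to0$ in $L^2$, so $\frac{d\mathcal E_\eps}{dZ}=\tfrac12\int_0^\infty(|\d_y^2\psi_\eps|^2+|\d_y v_\eps|^2)dy-2\mathcal E_\eps(Z)+o(1)$. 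Passing to the limit gives $\mathcal E\in W^{1,1}$ with $\mathcal E'=\tfrac12\int_0^\infty(|\d_y^2\psi|^2+|\d_y v|^2)dy-2\mathcal E$; since $\mathcal E(0)=0$ (by the Caccioppoli/interior regularity as in Theorem~\ref{existence}, which holds unchanged for \eqref{eqzero}) and $\mathcal E\ge0$ is forced by Grönwall from $\mathcal E'+2\mathcal E\ge0$, $\mathcal E(0)=0$... one gets $\mathcal E(Z)\ge0$ and $L^1$, and then integrating $\mathcal E'+2\mathcal E=\tfrac12\int(|\d_y^2\psi|^2+|\d_y v|^2)\ge0$ over $\mathbb R_+$ with $\mathcal E(0)=0$ and $\mathcal E\in L^1$ forces $\int_\Omega(|\d_y^2\psi|^2+|\d_y v|^2)=0$, hence $\bm u=0$ by the boundary conditions at $y=0$ (Hardy). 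In cases~\ref{itm:casB} and~\ref{itm:casC} the same works, using $\mathcal E(H)\le0$ from the non-positive operator $\Lambda$ at $z=H$, respectively the prescribed trace.

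For the last assertion, the $z$-regularity in case~\ref{itm:casA}, I would differentiate \eqref{eqzero} in $z$: formally $\bm{u}'=\d_z\bm{u}$ solves the same equation $L_0\bm{u}'=\d_z\bm{s}-T_1\bm{u}$ where $T_1\bm{u}=(\,z'\d_y\text{-type}\,)$ comes from differentiating the coefficient $z$ in the transport term, i.e. $T_1\bm{u}=\begin{pmatrix}0&\d_y\\\d_y&0\end{pmatrix}\bm{u}$. Since $\bm u\in\widetilde{E_0}$ already controls $\d_y v$ and $\d_y^2\psi$ but only $\psi,v$ in $L^2$ — not $\d_y\psi$, $\d_y^3 v$ — one must be slightly careful: $\d_y v\in L^2$ is fine for the second component, and for the first component $\d_y\psi$ is controlled by interpolation between $\psi/(1+y^2)\in L^2$ and $\d_y^2\psi\in L^2$ (a one-dimensional interpolation in $y$, uniformly in $z$), so $T_1\bm u\in\widetilde{E_1}'$ with $\norm[\widetilde{E_1}']{T_1\bm u}\le C\norm[\widetilde{E_0}]{\bm u}\le C\norm[\widetilde{E_1}']{\bm s}$. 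Then the uniqueness-with-estimate just proved, applied to $\bm u'$ with source $\d_z\bm s-T_1\bm u\in\widetilde{E_1}'$, gives $\d_z\bm u\in\widetilde{E_0}$ and $\norm[\widetilde{E_0}]{\d_z\bm u}\le C(\norm[\widetilde{E_1}']{\d_z\bm s}+\norm[\widetilde{E_0}]{\bm s})$ after absorbing constants; the hypothesis $\bm s\in\widetilde{E_{0,0}}$ is what guarantees the difference quotients in $z$ are admissible test objects (they vanish appropriately near $z=0$ so that the boundary term $\psi_{|z=0}=0$ is preserved under $\d_z$), which is the standard difference-quotient justification of this formal computation. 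The main obstacle, and the only place real care is needed, is precisely this last step: making the difference-quotient argument in $z$ rigorous despite the fact that $\widetilde{E_0}$ has no intrinsic $z$-regularity, so one cannot simply ``differentiate the equation'' — instead one works with $\tau_h\bm u=(\bm u(\cdot,\cdot+h)-\bm u)/h$, checks that it is an admissible competitor in the duality formulation for \eqref{eqzero} on the half-line $z>0$ (using $\operatorname{supp}\rho_\eps\subset\mathbb R^-$ conventions and $\bm s\in\widetilde{E_{0,0}}$ near $z=0$), applies the uniform-in-$h$ a priori bound from the uniqueness estimate, and passes to the limit $h\to0$; the interpolation control of $\d_y\psi$ described above is the one genuinely new estimate not already contained in the earlier sections.
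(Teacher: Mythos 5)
There is a genuine gap, in fact two, even though your overall strategy (re-running the duality existence argument and the energy uniqueness argument in $\widetilde{E_0}$, then treating $\d_z\bm{u}$ by differentiating the equation) is the same as the paper's. First, your computation of the contribution of the zero order terms is wrong, and your patch does not repair it. Because of the crossed pairing in \eqref{eqweak} (the $v$-equation is tested against $\phi$, the $\psi$-equation against $w$), the terms $-\psi$ and $-v$ add $-\int_\Omega \psi\phi-\int_\Omega v w$ to the weak form, hence, upon substituting $\bm{u}=\bm{\varpi}$, exactly the diagonal definite quantity $-\int_\Omega(|\phi|^2+|w|^2)$ in $\dual{L_0^*\bm{\varpi}}{\bm{\varpi}}$, and $+\int_0^\infty(|v|^2+|\psi|^2)\,dy$ in $\frac{d\mathcal{E}}{dZ}$ (this is visible in the paper's formulas for $\frac{d\mathcal{Q}}{dZ}$ and for $\int V\Lambda_H V$); that definite sign is the entire point of the modification \eqref{eqzero}. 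You instead obtain the indefinite terms $-2\int_\Omega w\phi$ and $-2\mathcal{E}$. Neither Cauchy--Schwarz nor Hardy can convert $-2\int w\phi$ into a negative definite $-\int(|w|^2+|\phi|^2)$: Cauchy--Schwarz only bounds its modulus, and Hardy controls weighted norms, not the full $L^2(\Omega)$ norms on the unbounded domain, so with your term the $\widetilde{E_0}$-coercivity does not follow. Likewise your uniqueness step fails with your sign: $\mathcal{E}'=\frac12\int(|\d_y^2\psi|^2+|\d_y v|^2)\,dy-2\mathcal{E}$ with $\mathcal{E}(0)=0$ and $\mathcal{E}\in L^1$ has nonzero solutions (e.g.\ $\mathcal{E}(Z)=\int_0^Z e^{-2(Z-s)}g(s)\,ds$ for any integrable $g\ge0$), so integrating over $\mathbb{R}_+$ does not force the dissipation to vanish. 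With the correct sign the conclusion is immediate ($\mathcal{E}$ nondecreasing, $\mathcal{E}(0)=0$, $\mathcal{E}\in L^1$, and $v=\psi=0$ follows even directly from the $|v|^2+|\psi|^2$ term), so your conclusions are right but the steps as written are broken at exactly the place where the new structure enters.

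Second, for the $\d_z\bm{u}$ estimate in case~\ref{itm:casA}, the real content is the identification of the horizontal boundary condition satisfied by $\d_z\bm{u}$ at $z=0$, and this is missing. The difference quotients $(\bm{u}(\cdot,\cdot+h)-\bm{u}(\cdot,\cdot))/h$ do not vanish near $z=0$, and the condition $\psi_{|z=0}=0$ is not ``preserved under $\d_z$'': in general $\d_z\psi_{|z=0}\neq0$. What the paper uses is that the trace of the first equation of \eqref{eqzero} at $z=0$, combined with $\psi_{|z=0}=0$, gives $\d_z v_{|z=0}=s_{\psi|z=0}$, which is an admissible (case~\ref{itm:casC}-type) boundary condition for the problem \eqref{eqdz} solved by $\d_z\bm{u}$; together with the fact that the extra source $(\d_y v,\d_y\psi)$ lies in $L^2$ (your interpolation remark is the correct ingredient there), this allows one to apply the existence/uniqueness result directly to $\d_z\bm{u}$. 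Without identifying this boundary condition, the boundary term at $z=0$ in your uniform-in-$h$ bound is uncontrolled and the difference-quotient argument does not close; note also that the hypothesis $\bm{s}\in\widetilde{E_{0,0}}$ serves to give meaning to, and lift, the boundary datum $s_{\psi|z=0}$, not to make the difference quotients vanish near $z=0$.
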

\begin{proof}
The proof of this lemma is exactly the same as before, the only new point being the control on $\d_z \bm{u}$. This comes from the fact that in this particular case we can deduce boundary conditions on $\d_z \bm{u}$.

More precisely we have $\d_z \bm{u}$ verifying inside the domain
\begin{equation}
\label{eqdz}
\begin{aligned}
& \d_z (\d_z v) + z \d_y (\d_z v) - \frac{1}{2} \d_y^4 (\d_z \psi) -  \d_z \psi= \d_z s_\psi-\d_y v \\
& \d_z (\d_z \psi) + z \d_y (\d_z \psi) + \frac{1}{2} \d_y^2 (\d_z v) -\d_z v =  \d_z s_v    - \d_y  \psi 
\end{aligned}
\end{equation}
and the boundary conditions at $y=0$ are $\d_z v= \d_y \d_z \psi=0, \d_z \psi=0$. Moreover, contrary to the original problem \eqref{eq}, we have $(\d_y \psi, \d_y v) \in L^2$ and $\widetilde{E_1} \subset L^2$ so  
\begin{equation*}\|( \d_z s_v -\d_y \psi,\d_z s_\psi-\d_y v)\|_{\widetilde{E_1}'} \leq C \left(\norm[\widetilde{E_1}']{\d_z \bm{s}}+\norm[\widetilde{E_1}']{\bm{s}}\right). \end{equation*}
All that remains is the boundary condition at $z=0$. In case~\ref{itm:casA}, the equation \eqref{eq} leads to
\begin{equation*}\d_z v_{|z=0}=s_{\psi|z=0} \end{equation*}
which is an admissible boundary condition.
\end{proof}

Once we have obtained this result we can now consider transparent boundary conditions.
\begin{theorem}
\begin{enumerate}[label=(\roman*)]
\item \label{itm:vtopsii} ($v$-to-$\psi$ operator) For all $H>0$ there exists a non-positive operator $\Lambda_H: H^\frac{1}{2}_0 \to H^{-\frac{1}{2}}$ such that the only solution of \eqref{eqzero} in the domain $y>0,z>H$ with boundary condition $v_{|z=H}=V$ verifies $\psi_{|z=H}= \Lambda_H V $.
\item \label{itm:vtopsiii} (transparent BC) Let $H_0>0$ and let $\bm{s}$ verifying the hypothesis of lemma~\ref{theozero} be a source term with support inside $0<z<H_0$. For any $H>H_0$ let $\bm{u}^b$ be the solution of \eqref{eqzero} on the domain $y>0,H>z>0$ with boundary conditions of type~\ref{itm:casB}
\begin{equation*}
\psi_{|z=0}^b=0, \psi^b_{|z=H}=\Lambda_H v^b_{|z=H}
\end{equation*}
and let $\bm{u}_t$ be the solution of \eqref{eqzero} on $y>0,z>H$ with type~\ref{itm:casC} boundary condition
\[v^t_{z=H}=v^b_{|z=H}\]
and zero source term. Then $\bm{u}^b 1_{0<z<H}+\bm{u}^t 1_{z \geq H}$ is the solution of \eqref{eqzero} on the whole domain $y>0,z>0$ with boundary condition $\psi_{|z=0}=0$.
\end{enumerate}
\end{theorem}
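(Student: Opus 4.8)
The plan is to establish \ref{itm:vtopsii} first, since it is what makes the \ref{itm:casB} boundary condition appearing in \ref{itm:vtopsiii} meaningful.

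\textbf{Construction of $\Lambda_H$.} Given $V\in H^{\frac12}_0$, I would lift it with the case~\ref{itm:casC} version of the lifting lemma and apply Lemma~\ref{theozero} in case~\ref{itm:casC} with zero source, obtaining the unique $\bm u_t=(v^t,\psi^t)\in\widetilde{E_0}$ of \eqref{eqzero} on $y>0,z>H$ with $v^t_{|z=H}=V$. Since $z$ is bounded near $z=H$, \eqref{eqzero} gives $\d_z\psi^t=-z\d_y\psi^t-\tfrac12\d_y^2 v^t+v^t\in L^2_z(H^{-1}_y)$ locally while $\psi^t\in L^2_z(H^2_y)$, so the trace theorem yields $\psi^t_{|z=H}\in H^{\frac12}_0$; one sets $\Lambda_H V:=\psi^t_{|z=H}$. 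Linearity and the bound $\Lambda_H\colon H^{\frac12}_0\to H^{-\frac12}$ are inherited from uniqueness and from the estimates of Lemma~\ref{theozero} and of the lifting. For non-positivity I reuse the device of Theorem~\ref{uniqueness}: with $\mathcal E^t(Z)=\int_0^\infty v^t(y,Z)\psi^t(y,Z)\,dy$ and $\bm u^t_\eps:=\rho_\eps*_z\bm u_t$,
\begin{equation*}
\frac{d\mathcal E^t_\eps}{dZ}=\frac12\int_0^\infty\bigl(|\d_y^2\psi^t_\eps|^2+|\d_y v^t_\eps|^2\bigr)\,dy+\int_0^\infty\bigl(|v^t_\eps|^2+|\psi^t_\eps|^2\bigr)\,dy+\int_0^\infty\bigl(r^\psi_\eps\psi^t_\eps+r^v_\eps v^t_\eps\bigr)\,dy,
\end{equation*}
which passes to the limit $\eps\to0$ and shows $\mathcal E^t\in W^{1,1}(H,\infty)$ is non-decreasing; since $\bm u_t\in\widetilde{E_0}\subset L^2$ one has $\mathcal E^t\in L^1(H,\infty)$, and a non-decreasing integrable function on a half-line has non-positive supremum, so $\mathcal E^t\le0$ everywhere. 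In particular $\dual{\Lambda_H V}{V}_{H^{-\frac12},H^{\frac12}_0}=\int_0^\infty v^t_{|z=H}(y)\psi^t_{|z=H}(y)\,dy=\mathcal E^t(H)\le0$.

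\textbf{Transparent condition.} By \ref{itm:vtopsii}, $\psi^b_{|z=H}=\Lambda_H v^b_{|z=H}$ is an admissible \ref{itm:casB} boundary condition, so Lemma~\ref{theozero} provides a unique $\bm u^b\in\widetilde{E_0}(0<z<H)$; since $\operatorname{supp}\bm s\subset\{z<H_0\}$ the source vanishes near $z=H$ and the trace $v^b_{|z=H}$ is well defined in $H^{\frac12}_0$, so Lemma~\ref{theozero} in case~\ref{itm:casC} gives a unique $\bm u^t\in\widetilde{E_0}(z>H)$ with zero source and $v^t_{|z=H}=v^b_{|z=H}$. Put $\bm u=\bm u^b1_{0<z<H}+\bm u^t1_{z>H}\in\widetilde{E_0}(z>0)$. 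Using the interior regularity of $\bm u^b,\bm u^t$ for $y>0$ (the interior-regularity statement \ref{itm:thiii} of Theorem~\ref{existence} and the ensuing remark, applied to \eqref{eqzero}) together with their traces at $z=H$, I observe that the only part of $L$ producing a distribution carried by $\{z=H\}$ is the $\d_z$ inside the skew-symmetric term $T$ (the $D$-part, the $z\d_y$-part and the zero-order part involve no $z$-derivative, and the cutoff $1_{z<H}$ does not depend on $y$); hence
\begin{equation*}
L\bm u=\bigl(L\bm u^b\bigr)1_{0<z<H}+\bigl(L\bm u^t\bigr)1_{z>H}+\bigl(\bm u^t_{|z=H}-\bm u^b_{|z=H}\bigr)\otimes\delta_{z=H}.
\end{equation*}
Since $L\bm u^b=\bm s$, $L\bm u^t=0$ and $\operatorname{supp}\bm s\subset\{z<H_0\}\subset\{z<H\}$, the first two terms sum to $\bm s$; and the jump vanishes because $v^t_{|z=H}=v^b_{|z=H}$ by construction while $\psi^t_{|z=H}=\Lambda_H v^t_{|z=H}=\Lambda_H v^b_{|z=H}=\psi^b_{|z=H}$ by \ref{itm:vtopsii} and the boundary condition imposed on $\bm u^b$. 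Thus $L\bm u=\bm s$ in $\mathcal D'(y>0,z>0)$; combined with $\psi_{|z=0}=\psi^b_{|z=0}=0$ (which makes the $z=0$ boundary terms in \eqref{eqweak} vanish, the test function $\phi$ being supported away from $z=0$), $\bm u$ is a weak solution of \eqref{eqzero} of type~\ref{itm:casA}, hence \emph{the} solution by the uniqueness part of Lemma~\ref{theozero}.

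\textbf{Main obstacle.} As everywhere in the paper, the genuine difficulty is not the algebra above but the trace analysis: giving meaning to $v^b_{|z=H}$, $\psi^b_{|z=H}$, $v^t_{|z=H}$, $\psi^t_{|z=H}$, $\psi^b_{|z=0}$ in the right spaces despite $\widetilde{E_0}$ carrying no $z$-regularity, and making the jump formula (equivalently, the slab-by-slab integration by parts and the cancellation of the interface terms) rigorous at that regularity. The non-positivity of $\Lambda_H$ also forces, exactly as in Theorem~\ref{uniqueness}, the $z$-mollification detour because $\bm u_t\notin\widetilde{E_1}$.
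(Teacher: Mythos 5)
Your argument is correct in substance, but it is not the paper's primary route, so here is the comparison. For point~\ref{itm:vtopsii}, the paper never invokes a trace theorem: it defines $\Lambda_H V:=\psi^V_{|z=H}$ by duality, pairing the solution $\bm{u}^V$ against the solution $\bm{u}^W$ for arbitrary data $W\in H^{\frac12}_0$ through the bilinear quantity $\mathcal Q(Z)=\int_0^\infty v^W\psi^V\,dy$; the $W^{1,1}$ bound on $\mathcal Q$ gives $\left|\int_0^\infty W\,\psi^V_{|z=H}\,dy\right|\le C\|V\|_{H^{\frac12}_0}\|W\|_{H^{\frac12}_0}$, hence continuity into $H^{-\frac12}$, and non-positivity comes from $\mathcal Q(Z)\to0$ at infinity, which is your monotonicity argument in disguise. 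Your route instead reads $\partial_z\psi^t=-z\partial_y\psi^t-\tfrac12\partial_y^2v^t+v^t\in L^2_{z,\mathrm{loc}}(H^{-1}_y)$ off the equation and applies the $L^2_z(H^2_y)$--$L^2_z(H^{-1}_y)$ interpolation trace theorem; this is legitimate and even yields a stronger target space ($H^{\frac12}$ instead of $H^{-\frac12}$, consistent with the paper's half-plane remark that $\Lambda_H$ gains regularity), although the subscript $0$ in your claim $\psi^t_{|z=H}\in H^{\frac12}_0$ does not follow from interpolation alone (and is not needed). For point~\ref{itm:vtopsiii}, the paper's main proof goes the other way round: it starts from the global case~\ref{itm:casA} solution, uses the extra estimate $\partial_z\bm{u}\in\widetilde{E_0}$ of lemma~\ref{theozero} (this is precisely why that estimate is proved) to obtain honest traces at $z=H$, notes that its restriction to $z>H$ solves the case~\ref{itm:casC} problem so that $\psi_{|z=H}=\Lambda_H v_{|z=H}$, and concludes by uniqueness in cases~\ref{itm:casB} and~\ref{itm:casC}; your gluing construction coincides with the paper's second, alternative argument, where the interface term is produced by a cutoff $\chi_\eps$ around $z=H$ rather than by your $\delta_{z=H}$ bookkeeping.

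Two caveats, at the same level of rigor as the paper itself but worth stating: the identification of weakly imposed boundary values with the trace-theorem traces (for instance $v^t_{|z=H}=V$ in your non-positivity step, and $\psi^b_{|z=H}=\Lambda_H v^b_{|z=H}$ used to cancel the jump) is asserted rather than proved and requires the mollification/cutoff justification of subsection~\ref{subsection:BCcoherence}; and to conclude that the glued function is the case~\ref{itm:casA} solution you must verify the weak formulation \eqref{eqweak} against test functions $w$ that do not vanish at $z=0$, which is inherited from the case~\ref{itm:casB} weak formulation satisfied by $\bm{u}^b$ (test functions supported in $z<H$ are admissible there), not from a pointwise identity $\psi^b_{|z=0}=0$.
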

\begin{proof}

We start by the point~\ref{itm:vtopsii}, i.e the definition of the operator $\Lambda_H$.

For $V \in H^\frac{1}{2}_0$ let $\bm{u}^V$the solution of \eqref{eqzero} in case~\ref{itm:casC} with nonhomogeneous boundary condition $v_{|z=H}=V$. Let us recall that such a solution is obtained by considering homogeneous boundary condition but with a source term $\bm{s}^V=L \bm{r}^V$ where $\bm{r}^V$ is an appropriate lifting.

Similarly for any $W \in H^\frac{1}{2}_0$ let $\bm{u}^W$ be the solution of \eqref{eqzero} with $w_{|z=H}=W$.

Using the same argument as in the proof of theorem~\ref{uniqueness}
\begin{equation*}
\mathcal{Q}(Z)=\int_0^\infty (v^W \psi^V ) (y,Z) dy
\end{equation*}
is well defined and in $W^{1,1}$ (note that $\mathcal{E}$ is the quadratic form associated with the bilinear form $\mathcal{Q}$) and

\begin{equation*}
\frac{ d \mathcal{Q}}{dZ} = \int_{0}^\infty \left( \d_y^2 \psi^V \d_y^2 \psi^W + \d_y v^V \d_y v^W +v^V v^W +\psi^V \psi^W \right)(y,Z) dy.
\end{equation*}

So as
\begin{align*}
\|\mathcal{Q} \|_{L^\infty} & \leq  C\|\mathcal{Q}\|_{W^{1,1}}  \leq C\left( \int |v^W \psi^V| +\int \left( \d_y^2 \psi^V \d_y^2 \psi^W + \d_y v^V \d_y v^W +v^V v^W+\psi^V \psi^W \right)\right) \\
& \leq C \|u^V\|_{\widetilde{E_0}} \|u^W\|_{\widetilde{E_0}} \leq C \|\bm{s}^V\|_{\widetilde{E_1}'} \|\bm{s}^W\|_{\widetilde{E_1}'}                                                                             \\
&  \leq C \|V\|_{H^\frac{1}{2}_0}\|W\|_{H^\frac{1}{2}_0}                                                                                 
\end{align*}
we obtain

\begin{equation*}
\forall W\in H^\frac{1}{2}_0, \left| \int_0^\infty W \psi^V_{|z=H} dy\right| \leq C  \|V\|_{H^\frac{1}{2}_0}\|W\|_{H^\frac{1}{2}_0}.
\end{equation*}

This means that $\psi^V_{|z=H} \in H^{-\frac{1}{2}}$ and moreover the application $\Lambda_H: V \mapsto \psi^V_{|z=H}$ is continuous from $H^\frac{1}{2}_0$ to its dual space.

At last since $\mathcal{Q}(Z) \to 0$ when $Z \to \infty$ we have
\begin{equation*}
\int_{0}^\infty V \Lambda_H V dy = \int_0^\infty V \psi^V_{|z=H} dy = -\int_H^\infty \int_0^\infty |\d_y^2 \psi^V|^2 +|\d_y v^V|^2+|v^V|^2+|\psi^V|^2 dy dz \leq 0.
\end{equation*}
and therefore $\Lambda_H$ is a non-positive operator.

It remains to prove~\ref{itm:vtopsiii} i.e that this condition is indeed a transparent boundary condition.

First of all let $\bm{u}$ be the solution of \eqref{eqzero} in case~\ref{itm:casA} and with source term $\bm{s}$.

Then by lemma~\ref{theozero} $v$ has proper trace in $H^\frac{1}{2}_0$ and $v_{|z=H}$, $\psi_{|z=H}$ are well defined. So $\psi_{|z=H} =\Lambda_H v_{|z=H}$ and $\bm{u}$ is a solution of \eqref{eqzero} so by uniqueness in the case~\ref{itm:casB} we have $\bm{u} 1_{0 \leq z \leq H} = \bm{u}^b$. 

We deduce that $\bm{u}^b_{|z=H}$ is well defined and is an admissible trace so $\bm{u}^t$ is well defined and once more by uniqueness in case~\ref{itm:casC} $\bm{u} 1_{z \geq} = \bm{u}^t$.

We can prove this result without solving the problem on the whole space: by constructing $\bm{u}$ from $\bm{u}^b$ and $\bm{u}^t$ in order to show that such $v$-to-$\psi$ operator is necessary to ensure the continuity of both $v$ and $\psi$.

With $\bm{u}=\bm{u}^b 1_{0 \leq z \leq H}+\bm{u}^t 1_{z \geq H}$ it is straightforward to see that the weak formulation on the whole space is verified for any test function with support inside $0<z<H$ or $z>H$.

Let $\bm{\varpi}=(w,\phi) \in \mathcal{D}$. Let $\chi$ be a smooth function such that $\chi(s)=0$ for $|s| > 2$ and $\chi(s)=1$ for $|s|<1$.

Then with $\chi_\eps(z)=\chi\left(\frac{z-H}{\eps} \right)$ using the fact that $\bm{\varpi}=\bm{\varpi} \chi_\eps +\bm{\varpi} (1-\chi_\eps)$ we obtain

\begin{equation*}
\langle L\bm{u}, \bm{\varpi} \rangle=\langle L\bm{u} , \bm{\varpi} (1-\chi_\eps) \rangle + \langle L \bm{u}, \bm{\varpi} \chi_\eps \rangle = \langle \bm{s} + 0 , \bm{\varpi} (1-\chi_\eps) \rangle +\langle L \bm{u}, \bm{\varpi} \chi_\eps \rangle 
\end{equation*}
as $\bm{\varpi} (1-\chi_\eps)$ is the sum of a function with support inside $0<z<H$ and a function with support inside $z>H$.

The last term is
\begin{align*}
\int_\Omega  - v  (\d_z\phi \chi_\eps+\phi \d_z \chi_\eps) - z v  \d_y\phi \chi_\eps-\frac{1}{2} \d_y^2 \psi \d_y^2 \phi \chi_\eps +\psi \phi \chi_\eps \\+\int_\Omega - \psi ( \d_z w \chi_\eps + w \d_z \chi_\eps) - z  \psi  \d_y w \chi_\eps -\frac{1}{2} \d_y v \d_y w \chi_\eps + v w \chi_\eps
\end{align*}
and we will show that it goes to $0$ when $\eps \to 0$.

Indeed as $\bm{u} \in \widetilde{E_0}$ and $\bm{\varpi} \in \widetilde{E_1}$ we have when $\eps \to 0$
\begin{equation*}
\int_\Omega  - v  \d_z\phi \chi_\eps - z v  \d_y\phi \chi_\eps-\frac{1}{2} \d_y^2 \psi \d_y^2 \phi \chi_\eps +\psi \phi \chi_\eps +\int_\Omega - \psi \d_z w \chi_\eps  - z  \psi  \d_y w \chi_\eps -\frac{1}{2} \d_y v \d_y w \chi_\eps + v w \chi_\eps \to 0.
\end{equation*}
As $\bm{\varpi}$ is identically $0$ near $y=0$ and $\bm{s}=0$ near $z=H$ using once more the same arguments as before
\begin{equation*}
\int_\Omega v \phi \d_z \chi_\eps + \psi w \d_z \chi_\eps \to \int_{y=0}^\infty \left((v^b-v^t) \phi + (\psi^b-\psi^t) w \right)(y,H) dy
\end{equation*}
which is zero, as the boundary conditions can be rewritten as $v^b-v^t=0$ and $\psi^b-\psi^t=\Lambda_H v^b-\psi^t=\Lambda_H v^b - \Lambda_H v^t=0$.
\end{proof}
\subsection{The case of the half plane}

In the case where the domain is the half-plane $z>0$ existence and uniqueness are a lot more easier. In fact we can use Fourier transform. Denoting by $\widehat{f}(\xi,z)$ the Fourier transform of $f(y,z)$ with respect to $y$ one can see that the problem can be rewritten as an ODE for each $\xi$ 

\begin{align*}
\begin{pmatrix} \d_z+z i \xi & -\xi^4 \\-\xi^2  & \d_z +z i \xi\end{pmatrix} \begin{pmatrix} \widehat{v} \\ \widehat{\psi} \end{pmatrix}=\begin{pmatrix} \widehat{s_\psi} \\ \widehat{s_v} \end{pmatrix}.
\end{align*}

Hence with $\widehat{w_\pm}=\widehat{v}  \pm |\xi| \widehat{\psi}$ the problem is diagonalized

\begin{align*}
\d_z \widehat{w}_\pm + (z i \xi  \mp |\xi|^3) \widehat{w_\pm} = \widehat{s}_\pm 
\end{align*}

and the explicit solution is
\begin{align*}
\widehat{w}_\pm(\xi,z)=\widehat{w}_\pm(0) e^{-\frac{z^2}{2}i \xi} e^{\pm |\xi|^3 z} + \int_0^z e^{-\frac{z^2-s^2}{2}i \xi}e^{\pm |\xi|^3 (z-s)} \widehat{s_\pm} (s) ds 
\end{align*}

Note that two exponential modes appear: one in $e^{-|\xi|^3 z}$ and one in $e^{|\xi|^3 z}$. To ensure that $\widehat{w}_+$ ansd $\widehat{w}_-$ are both in $L^2$, a necessary and sufficient condition is that the coefficient of $\exp(|\xi|^3z)$ is $0$. This offers another explanation of why only one condition at $z=0$ can be fixed.

For the transparent boundary condition, if there is no source term this condition simply becomes $\widehat{w}_+(\xi,H)=0$ i.e
\begin{equation*}
\forall \xi , \, \widehat{v} + |\xi| \widehat{\psi}=0
\end{equation*}
which in real space translates as $\Lambda_H = - (-\Delta)^\frac{-1}{2}$. This is exactly the condition used in~\cite{marcotte2016equatorial} for the numerical approximation.

It is to be noted that in this case the operator $\Lambda_H$ goes in fact from $\dot{H}^\frac{1}{2}$ to $\dot{H}^\frac{3}{2}$ which is the expected regularity as $\d_y \psi$ is of the same regularity as $v$.

But in our case because of the transport term we cannot use symmetries to extend \eqref{eq} to the whole half space.

\appendix
\section*{Appendix}
\subsection*{Physical derivation}
We recall here the main steps of the derivation of \eqref{eq} and refer to~\cite{marcotte2016equatorial} for details.

We consider the Stokes-Coriolis problem between two surfaces of revolution $\Gamma_\pm$ (our main focus will be spheres of radius $R_\pm$) and denote by $(X,\Phi,Z)$ the cylindrical coordinates. The Stokes equation of an incompressible fluid rotating around the axis $e_Z$ where we neglect the transport, in non-dimensional variables and with $E$ the Ekman number, can be written as
\begin{align*}
\grad p + e_Z \vect U - E \lapla U & = 0 \\
\div{U}                            & =0. 
\end{align*}
We consider non-penetration boundary conditions on $\Gamma_\pm$
\begin{equation*}
\begin{aligned}
&U\cdot n =0 \\
& U= V_\pm e_\Phi+\Upsilon_\pm e_\Phi \vect n.
\end{aligned}
\end{equation*}

If we consider an axisymmetric flow, $U=(U_X(X,Z),V(X,Z),U_Z(X,Z))$ then the incompressibility condition becomes $\d_X U_X+\d_Z U_Z=0$ so there exist a stream function $\Psi$ such that
\begin{equation*}
U= \begin{pmatrix} \d_Z \Psi \\ V \\- \d_X \Psi \end{pmatrix}
\end{equation*}
The corresponding equations are  
\begin{align*}
\label{geo}                         
\d_z V - E \lapla^2 \Psi=0 \\
\d_z \Psi + E \lapla V =0     
\end{align*}

and the boundary conditions
\begin{align*}
V_{|\Gamma^+}=V_+, V_{|\Gamma^-}=V_-                      \\
\d_n \Psi_{|\Gamma^+}=\Upsilon_+,\d_n \Psi_{|\Gamma^-}=\Upsilon_- \\
\Psi_{|\Gamma^\pm=0}=0                                      
\end{align*}

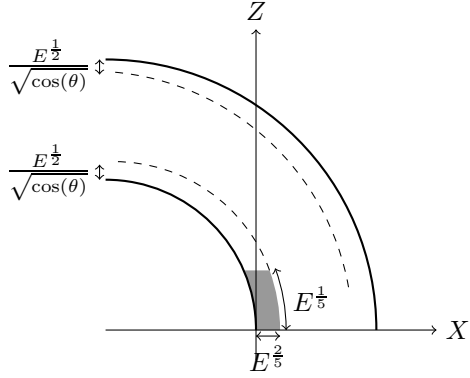
\begin{figure}
\centering
\label{fig:BLscaling}
\caption{ The different scalings and boundary layers}
\begin{tikzpicture}[scale=0.4]
    \fill[gray!80] (5.8,0) arc (0:20:5.8) -- (20:5.8)--({asin(5.8/5*sin(20))}:5) arc ({asin(5.8/5*sin(20))}:0:5)--(5.8,0);
	\fill[gray!80] (5.5,0) arc (0:2:5.5) -- (2:5.5)--(2:5) arc (2:0:5)--(5.5,0);
	\draw[dashed] (20:5.8) arc (20:90:5.5);
	\draw[dashed] (10:8.2) arc (10:85:8.7);
    \draw[<->] (6,0) arc (0:20:6) node[midway,right] {$E^\frac{1}{5}$};
	\draw[<->] (5,-0.2) --(5.8,-0.2)node[midway,below] {$E^\frac{2}{5}$} ;
	\draw[<->] (-0.2,5) -- (-0.2,5.5) node[midway,left] {$\frac{E^\frac{1}{2}}{\sqrt{\cos(\theta)}}$};
	\draw[<->] (-0.2,8.5) -- (-0.2,9) node[midway,left] {$\frac{E^\frac{1}{2}}{\sqrt{\cos(\theta)}}$};
	\draw[thick] (5,0) arc (0:90:5);
    \draw[thick] (9,0) arc (0:90:9);
    \draw[->,thin] (0,0) --(11,0) node[right] {$X$};
    \draw[->,thin] (5,-1) --(5,10) node[above] {$Z$};
\end{tikzpicture}
\end{figure}
When $E \to 0$ we obtain the formal equations $\d_Z V=0$, $\d_Z \Psi=0$. So at the main order in $E$, inside the domain
\begin{equation*} 
\begin{aligned}
&V(X,Z)=V^0(X)+o(1)\\
&\Psi(X,Z)=0+o(1)
\end{aligned}
\end{equation*}
In order to find $V^0$ and pursue further the asymptotic expansion we must consider the boundary layer ensuring that the boundary conditions are satisfied. 

Near a horizontal boundary (i.e constant $Z$) we recover the classical Ekman scaling
\begin{equation*}
\begin{aligned}
&V=v \left(X,\frac{Z}{E^\frac{1}{2}}\right)\\
&\Psi=E^\frac{1}{2} \psi \left(X,\frac{Z}{E^\frac{1}{2}}\right)
\end{aligned}
\end{equation*}
and with $(x,z)$ the rescaled variables the boundary equation is
\begin{align*}
\d_z v - \d_z^4 \psi=0 \\
\d_z  \psi +  \d_z^2 v =0.
\end{align*}

Note that the same equation holds for any boundary as long as  $\cos(\theta)= e_Z \cdot n$ does not approach $0$ where $\theta$ is the angle between the normal of the surface and the axis of rotation. In this case the scaling is 
\begin{equation*}
Z=\frac{z}{E^\frac{1}{2} \cos(\theta)^{-\frac{1}{2}}}
\end{equation*}

For a vertical boundary (i.e constant $X$) the scaling is
\begin{equation*}
\begin{aligned}
&V=v \left(\frac{X}{E^\frac{1}{3}},Z\right)\\
&\Psi=E^\frac{1}{3} \psi \left(\frac{X}{E^\frac{1}{3}},Z\right)
\end{aligned}
\end{equation*}
and the equation
\begin{align*}
\d_z v - \d_y^4\psi=0 \\
\d_z \psi + \d_y^2 v =0. 
\end{align*}

In the case of $\cos(\theta)$ approaching $0$ the previous scaling and equation are no longer correct.

If the boundary is $Z = (-X)^\alpha 1_{X<0}$, denoting by $Y=X+Z^\frac{1}{\alpha}$ the equation becomes
\begin{equation*}
\begin{aligned}
&\left(\d_Z+\alpha^{-1} Z^\frac{1-\alpha}{\alpha}\d_Y\right) V - E \left(\d_Z^2+\d_Y^2+2\alpha^{-1} Z^\frac{1-\alpha}{\alpha} \d_Y \d_Z\right)^2 \Psi  =0\\
&\left(\d_Z+\alpha^{-1} Z^\frac{1-\alpha}{\alpha}\d_Y\right)\Psi + E \left(\d_Z^2+\d_Y^2+2\alpha^{-1} Z^\frac{1-\alpha}{\alpha} \d_Y \d_Z\right) V = 0
\end{aligned}
\end{equation*}
The  scaling is then
\begin{align*}
V    & =v \left( \frac{y}{E^\frac{1}{3-\alpha}},\frac{z}{E^\frac{\alpha}{3-\alpha}}\right)                         \\
\Psi & =E^\frac{1}{3-\alpha} \psi \left(\frac{y}{E^\frac{1}{3-\alpha}},\frac{z}{E^\frac{\alpha}{3-\alpha}} \right) 
\end{align*}

and the associated equation becomes
\begin{align*}
& \d_z v+\alpha^{-1}z^\frac{1-\alpha}{\alpha} \d_y v - \d_y^4 \psi-E^\frac{4(1-\alpha)}{(3-\alpha)} \d_z^4 \psi =0 \\
& \d_z \psi +\alpha^{-1}z^\frac{1-\alpha}{\alpha} \d_y \psi   + \d_y^2 v +E^\frac{2(1-\alpha)}{(3-\alpha)}  \d_z^2 v =0                                                   
\end{align*}
with domain $y>0,z>0$.

The higher terms in $\d_z$ lead to another boundary layer of size $E^\frac{3(1-\alpha)}{2(3-\alpha)}$ in $z$ i.e a standard Ekman layer of size $E^{\frac{3(1-\alpha)}{2(3-\alpha)}+\frac{\alpha}{1-\alpha}}=E^\frac{1}{2}$. Note that this Ekman layer can be expressed in term of a boundary condition connecting $v$ and $\d_z \psi$ but that in the physical case it is simply a symmetry condition, $\psi=0$. 

Considering only the higher order in $E$ we obtain the announced equation for the spherical case $\alpha=\frac{1}{2}$.

Note that there are other boundary layers in the vicinity of the equator or of the cylinder $X=R_-$, but since we do not describe them in this paper we did not include them in figure~\ref{fig:BLscaling}. We refer to \cite{stewartson1957almost,marcotte2016equatorial} for a complete physical description.

\subsection*{Duality argument}

To prove existence of a solution we used a simpler version of Lions-Lax-Milgram~\cite{lions1966remarks} which can be rewritten as:
\begin{lemma}
\label{analfunc}
Let $E$ and $F$ two reflexive Banach spaces and
\begin{equation*}L:E \rightarrow F'\end{equation*}
a continuous operator.

Let $D \subset F$ a dense subspace of $F$ and $L^*$ the adjoint of $L$ from $F$ to $E'$.

If there exists a constant $\gamma >0$ such that
\begin{equation}
\label{coercivity}
\forall v \in D, \norm[E']{L^* v} \geq \gamma \norm[F]{v}
\end{equation}
then for all $f \in F'$ there exist a solution $u$ of
\begin{equation*}L u =f\end{equation*}
with
\begin{equation*}
\norm[E]{u} \leq \frac{1}{\gamma} \norm[F']{f}
\end{equation*}
\end{lemma}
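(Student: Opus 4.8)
The statement to prove is Lemma~\ref{analfunc}, a simplified form of the Lions--Lax--Milgram theorem. Here is how I would approach it.

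The plan is to turn the coercivity estimate \eqref{coercivity} into the solvability of $Lu=f$ by a Hahn--Banach argument on the range of $L^*$. First I would observe that $L^*\colon F\to E'$ is a well-defined continuous linear operator (its norm equals that of $L$, since $E$ and $F$ are reflexive so that $L^{**}=L$). The hypothesis says that $L^*$ is bounded below on the dense subspace $D$: for all $v\in D$, $\gamma\norm[F]{v}\le\norm[E']{L^*v}$. By continuity of both $L^*$ and the norms, this bound extends to all of $F$, so $L^*$ is injective and has closed range $R:=L^*(F)\subset E'$. Define a linear functional $\ell$ on $R$ by $\ell(L^*v):=\dual{f}{v}_{F',F}$; this is well-defined because $L^*$ is injective, and it is bounded on $R$ since $|\ell(L^*v)|=|\dual{f}{v}|\le\norm[F']{f}\norm[F]{v}\le\gamma^{-1}\norm[F']{f}\,\norm[E']{L^*v}$, so $\norm{\ell}_{R'}\le\gamma^{-1}\norm[F']{f}$.

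Next I would invoke Hahn--Banach to extend $\ell$ to a continuous linear functional on all of $E'$ with the same norm. Since $E$ is reflexive, this extension is represented by an element $u\in E$ with $\norm[E]{u}=\norm{\ell}_{R'}\le\gamma^{-1}\norm[F']{f}$, i.e.\ $\dual{g}{u}_{E',E}$ extends $\ell$. In particular, taking $g=L^*v$ for arbitrary $v\in F$, we get $\dual{L^*v}{u}_{E',E}=\ell(L^*v)=\dual{f}{v}_{F',F}$. By the definition of the adjoint, $\dual{L^*v}{u}_{E',E}=\dual{Lu}{v}_{F',F}$, hence $\dual{Lu}{v}=\dual{f}{v}$ for every $v\in F$, which means $Lu=f$ in $F'$. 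The norm bound $\norm[E]{u}\le\gamma^{-1}\norm[F']{f}$ has already been recorded, so this completes the argument.

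The main technical point — and the only place any care is needed — is the passage from the estimate on the dense subspace $D$ to the estimate on all of $F$, and correspondingly checking that $R=L^*(F)$ is closed; here one uses that if $L^*v_n\to g$ in $E'$ then $(v_n)$ is Cauchy in $F$ by \eqref{coercivity}, hence converges to some $v$, and $g=L^*v$ by continuity. The reflexivity of $E$ is essential for the representation step, and the reflexivity of $F$ is what legitimizes writing $L^*$ as the genuine adjoint of $L$ (so that $\dual{L^*v}{u}=\dual{Lu}{v}$). Everything else is a direct application of Hahn--Banach and is entirely routine.
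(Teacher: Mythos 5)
Your proof is correct and follows essentially the same route as the paper: define the functional $\ell(L^*v)=\dual{f}{v}_{F',F}$ on the range of $L^*$, bound it via the coercivity inequality, extend by Hahn--Banach, and use reflexivity of $E$ to represent the extension by the desired $u$. The only cosmetic difference is that you first extend \eqref{coercivity} from $D$ to all of $F$ by continuity of $L^*$ (and note the range is closed, which Hahn--Banach does not even require), whereas the paper works on $L^*D$ and passes to $L^*F$ by density of $D$; these are interchangeable steps.
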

The proof is elementary but as we did not find this exact formulation in the literature we detail the proof for the reader's convenience. 

Let us first notice that the relation \eqref{coercivity}, also called observability inequality, ensures that $L^*$ is injective. Thus the linear form
\begin{align*}
\phi: L^* D & \rightarrow \R             \\
L^* v                   & \mapsto \dual{f}{v}_{F',F} 
\end{align*}
is well defined. Moreover it is continuous
\begin{equation*}
|\phi(L^*v)|=|\dual{f}{v}_{F',F}| \leq \norm[F']{f} \norm[F]{v} \leq \frac{1}{\gamma} \norm[F']{f} \norm[E']{L^* v}.
\end{equation*}
As $D$ is dense we can define $\phi$ as a continuous form on $L^* F \subset E'$.

Using Hahn-Banach theorem, we then extend $\phi$ as a linear continuous form on the whole $E'$.
As $E$ is a reflexive Banach space there exists $u \in E$ such that
\begin{align*}
\forall g \in E',\dual{u}{g}_{E,E'}=\phi(g) 
\end{align*}
and in particular
\begin{align*}
\forall L^*v \in L^* F, \dual{u}{L^* v}_{E,E'}=\phi(L^* v)=\dual{f}{v}_{F',F} 
\end{align*}
i.e
\begin{align*}
\forall v \in F, \dual{L u}{v}_{F',F}=\dual{f}{v}_{F',F}.
\end{align*}

\section*{Acknowledgements}
This project has received funding from the European Research Council (ERC) under the European Union's Horizon 2020 research and innovation program Grant agreement No 637653, project BLOC ``Mathematical Study of Boundary Layers in Oceanic Motion''. 	
\bibliographystyle{amsplain}
\bibliography{refekman}
\end{document}